\theoremstyle{plain}
\newtheorem{thm}{Theorem}[section]
\newtheorem{cor}[thm]{Corollary}
\newtheorem{prop}[thm]{Proposition}
\theoremstyle{definition}
\theoremstyle{remark}
\newtheorem{rem}[thm]{Remark}
\begin{document}

\title{How to compute the length of a geodesic on a Riemannian manifold with small error in arbitrary Sobolev norms}
\author{J\"org Kampen $^{1}$ }
\maketitle

\begin{abstract}
We compute the length of geodesics on a Riemannian manifold by regular polynomial interpolation of the global solution of the eikonal equation related to the line element $ds^2=g_{ij}dx^idx^j$ of the manifold. Our algorithm approximates the length functional in arbitrarily strong Sobolev norms. Error estimates are obtained where the geometric information is used.  It is pointed out how the algorithm can be used to get accurate approximation of solutions of parabolic partial differential equations leading obvious applications to finance and physics.
\end{abstract}

\footnotetext[1]{Weierstrass Institute for Applied Analysis and Stochastics,
Mohrenstr. 39, 10117 Berlin, Germany.
\texttt{{kampen@wias-berlin.de}}.}

\section{Introduction}
Let $(M,g)$ is a Riemannian manifold, i.e. a differentiable $n$-dimensional manifold with a function $g$, which defines for all $p\in M$ a positive definite symmetric bilinear form
\begin{equation}
g_p:T_pM\times T_pM\rightarrow {\mathbb R}
\end{equation}
such that for any given vector fields $X,Y\in X(M)$ the map
\begin{equation}
g(X,Y): M\rightarrow {\mathbb R},~p\rightarrow g(X,Y)(p):=g_p(X_p,Y_p)
\end{equation}
is differentiable. The Riemannian metric $g$ allows to define a metric $d_M$ on $M$ via the length of curves
\begin{equation}
d_M(x,y):=\inf_{\mbox{$\gamma$ diff.}}\left\lbrace L(\gamma)|\gamma :[0,1]\rightarrow M, \gamma(0)=x, \gamma(1)=y\right\rbrace, 
\end{equation}
with
\begin{equation}
    L(\gamma)=\int_0^1 \sqrt{g_{\gamma(t)}(\dot \gamma(t),\dot \gamma(t))} \,\mathrm dt. 
\end{equation}
With this definition any connected Riemannian manifold becomes a metric space, and 
it is well known that for any compact Riemannian manifold any two points $x,y\in M$ can be connected by a geodesic whose length is $d_M(x,y)$. 
If $\nabla$ denotes the Levi-Civita connection, then a geodesic $\gamma$ is characterized by the equation
\begin{equation}
\nabla_{\dot\gamma}\dot\gamma=0,
\end{equation}
which becomes (in terms of the coordinates of the values of the curve $\gamma$)
\begin{equation}\label{geo1}
    \frac{d^2x^\lambda }{dt^2} + \Gamma^{\lambda}_{~\mu \nu }\frac{dx^\mu }{dt}\frac{dx^\nu }{dt} = 0\ , 
\end{equation}
where the well-known Christoffel symbols are 
\begin{equation}\label{geo2}
    \Gamma^\kappa_{\; \mu \nu}=\frac{1}{2}g^{\kappa \rho} \left( \partial_\mu g_{\nu \rho}+\partial_\nu g_{\mu \rho}-\partial_\rho g_{\mu \nu} \right).
\end{equation} 
This is an $n$-dimensional nonlinear ordinary differential equation with values in ${\mathbb R}^n$ which is difficult to compute numerically in general (note the quadratic terms). For computing the length of a geodesic it is easier to compute the solution of a  eikonal equation of the form
\begin{equation}\label{eik}
d^2=\frac{1}{4}\sum_{ij}a_{ij}(x)d^2_{x_i}d^2_{x_j}
\end{equation}
(boundary conditions considered later), where $x\rightarrow a_{ij}(x)$ are functions such that at each $x\in{\mathbb R}^n$ the matrix $(a_{ij}(x))$ is the inverse of the positive matrix $(g_{ij}(x))$ at each point $x$. Here $f_{x_i}:=\frac{\partial f}{\partial x_i}$ denotes the derivative of $f$ with respect to the variable $x_i$.
In general we shall write $\partial^{\alpha}f$, $\partial^{\alpha}_xf$ or $\frac{\partial}{\partial x^{\alpha}}f$ for the multivariate derivative with multiindex $\alpha=(\alpha_1,\cdots ,\alpha_n)$.
The connection between the length of a geodesic which is given in local coordinates as in \eqref{geo1}, \eqref{geo2} and the length function $d^2$ defined by equation \eqref{eik} is considered in section 2.
This way the problem of finding the length of a geodesic is reduced to solving a nonlinear first-order partial differential equation in some domain of Euclidean space.

The computation of $d^2$ is still far from trivial, however. Even if the data $g_{ij}$ are analytic functions, power series expansion typically lead to power series solutions for $d^2$ with small radius of convergence. 
Hence, the question is how we can approximate the function $d^2$ globally. Moreover, for some applications such as the accurate computation of diffusions we need the approximation of $d^2$ in strong norms (Sobolev norms of form
$H^{s,p}$ for possibly any positive real $s$. For that matter recall that $H^{0,p}\left( {\mathbb R}^n\right) =L^p\left( {\mathbb R}^n\right)$ and that for any $s\in {\mathbb R}$ we may define $H^{s,p}$ to be the set of all tempered distributions $\phi\in {\cal S}'$ such that $I_{-s}\phi$ is a function in $L^p\left( {\mathbb R}^n\right)$, where
$I_s$ is the pseudo-differential operator with symbol $\sigma_s(\xi)=\left(1+|\xi|^2\right)^{-\frac{s}{2}}$, i.e.
\begin{equation}
 I_s\phi ={\cal F}^{-1}\sigma_s{\cal F} \phi,~~\phi\in {\cal S}',
\end{equation}
${\cal F}$ denoting the Fourier transform. The goal of the present paper can then be formulated as follows: find for each $\epsilon >0$ and each real $s,p$ $(p\geq 1)$ an approximative solution $q^2_{s,p}$ to \eqref{eik} such that  
\begin{equation}
\|d^2-q_{s,p}^2\|_{s,p}\leq \epsilon .
\end{equation}
We shall call $q_{s,p}^2$ an $H^{s,p}$ approximation to $d^ 2$ for reasons which will become apparent later.
Let us motivate this ambitious task by looking at a specific application.
There are a lot of applications for computations of the length of a geodesic, where applications to computations in general relativity are only one domain. Another important example is the leading term of the expansion of the fundamental solution of linear parabolic solutions (with variable coefficients). Varadhan showed that
the fundamental solution of the diffusion equation
\begin{equation}\label{PPDE} 
\begin{array}{l}
\frac{\partial u}{\partial t}=\frac{1}{2}\sum_{i,j}a_{ij}\frac{\partial^2 u}{\partial x_i\partial x_j}+
\sum_i b_i\frac{\partial u}{\partial x_i},
\end{array}
\end{equation}
(where the diffusion coefficients $a_{ij}$ and the first order coefficients $b_i$ in \eqref{PPDE}
depend on the spatial variable $x$ only) is connected to the length $d$ of the geodesic with respect to the line element $ds^2=\sum_{ij}a^{ij}dx_idx_j$ ($a^{ij}$ being the inverse of $a_{ij}$) via the relation
\begin{equation}
d^2(x,y)=\lim_{t\downarrow 0}t\ln p(t,x,y).
\end{equation}
\begin{rem}
Solving equation \eqref{eik} we can assume that the matrix-valued function $x\rightarrow (a_{ij}(x))$ is symmetric, i.e.
$a_{ij}(x)=a_{ji}(x)$ for all $1\leq i,j\leq n$.  This is because
\begin{equation}
\begin{array}{ll}
d^ 2(x,y)=&\frac{1}{4}\sum_{ij}a_{ij}d^ 2_{x_i}d^ 2_{x_j}=\frac{1}{4}\sum_{ij}\frac{1}{2}\left( a_{ij}+a_{ji}\right) d^ 2_{x_i}d^ 2_{x_j}\\
\\
&+\frac{1}{4}\sum_{ij}\frac{1}{2}\left( a_{ij}-a_{ji}\right) d^ 2_{x_i}d^ 2_{x_j}=\frac{1}{4}\sum_{ij}\frac{1}{2}\left( a_{ij}+a_{ji}\right) d^ 2_{x_i}d^ 2_{x_j},
\end{array}
\end{equation}
so we can always substitute the matrix $a_{ij}$ by its symmetrization $\frac{1}{2}\left( a_{ij}+a_{ji}\right)$ without affecting the solution $d^ 2$. 
\end{rem}
In \cite{Ka} we have seen that for $C^{\infty}$ coefficient functions $x\rightarrow a_{ij}(x)$ and $x\rightarrow b_i(x)$ and if some boundedness conditions of the derivatives are satisfied the fundamental solution has the pointwise valid form
\begin{equation}\label{WKBrep}
p(t,x,y)=\frac{1}{\sqrt{2\pi t}^n}\exp\left(-\frac{d^2(x,y)}{2 t}+\sum_{k= 0}^{\infty}c_k(x,y)t^k\right), 
\end{equation}
where the functions $x\rightarrow c_k(x,y),~k\geq 0$ are solutions of recursively defined linear first order equations for each $y$. These equations can be solved by methods of characteristics or approximated by regular polynomial interpolation methods outlined in \cite{Ka2}. In the computation of the WKB-coefficients $d^2$ and $c_k,~k\geq 0$ the recursive relations for $c_{k+1}$ involve second order derivatives of $c_k$, and therefore implicitly derivatives of order $2k$ of the squared metric $d^2$. Hence it is of great interest to compute not only $d^2$ but also its derivatives up to a given order with high accuracy. The present work shows how his can be accomplished. In Section 2 we recapture some facts about the connection of the geodesic equation \eqref{geo1}, \eqref{geo2} and equation \eqref{eik}, and prove global existence, regularity and uniqueness of the latter (family) of equation(s) leading us to theorem 2.3. Then in Section 3 we provide further analysis of the family of eikonal equations which lead us to local representations of the solution. 
In Section 4 we construct first a weak approximation of the solution (in $L^p$ sense), and then extend this to a recursive construction of an $H^{s,p}$-approximation.  In Section 5 we provide error estimates by using geometric information. Section 6 points out how the method may be applied for accurate approximation of diffusions, and we finish with a conclusion in Section 7.

\section{Global existence and regularity of the squared Riemannian distance $d^2$}
We shall only sketch the connection between geodesics and the eikonal equation \eqref{eik}. It is almost standard,
and details can be found in \cite{Ka} and \cite{Jo}. Our interest here is that the eikonal equation together with careful chosen boundary conditions has a global and unique solution. We shall have two different arguments for uniqueness: one is via uniqueness of an associated diffusion and WKB-representations (or, alternatively, Varadhan's result, cf. \cite{V}), but we will have the same insight from an other point of view when we look at local representations of the solution in the next Section.
We consider Riemannian manifolds where any two points can be connected by a minimal geodesic. For our purposes it is sufficient to consider manifolds which are geodesically complete.
Recall that a Riemannian manifold $M$ is geodesically complete if for all
$p\in M$ the exponential map $\exp_p: T_pM\rightarrow M$ is defined globally on $T_pM$. Here, $T_pM$ denotes the tangential space of the manifold $M$ at $p\in M$. The Hopf-Rinow theorem provides conditions for Riemannian manifolds to be geodesically complete. Especially we have
\begin{thm}
For a Riemannian manifold $M$ the following statements are equivalent:
\begin{itemize}
\item $M$ is complete as a metric space.

\item The closed and bounded sets of $M$ are compact.

\item $M$ is geodesically complete.
\end{itemize}
Each of these equivalent statements implies that geodesics are curves of shortest length. Moreover,
if $M$ is geodesically complete, then any two points of $M$ can be joined by a minimal geodesic.  
\end{thm}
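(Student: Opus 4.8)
The plan is to prove the central implication first — that geodesic completeness forces the existence of a minimizing geodesic between any two prescribed points — and then to derive the three equivalences as comparatively soft consequences, using only the basic existence and uniqueness theory for the geodesic ODE \eqref{geo1} together with the fact that geodesics are \emph{locally} length-minimizing (which follows from the Gauss lemma and the first variation of the length functional $L$, and requires no completeness hypothesis). Throughout I assume $M$ connected, so that $d_M$ is finite.

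\textbf{Step 1 (the core lemma).} Fix $p\in M$ and assume $\exp_p$ is defined on all of $T_pM$. Given $q\in M$ with $r:=d_M(p,q)$, I would choose $\delta>0$ small enough that the geodesic sphere $\Sigma_\delta:=\exp_p(\{v\in T_pM:|v|_g=\delta\})$ lies inside a normal neighbourhood of $p$ and is therefore compact. The continuous function $x\mapsto d_M(x,q)$ attains its minimum on $\Sigma_\delta$ at some $x_0=\exp_p(\delta v)$ with $|v|_g=1$; I then claim the unit-speed geodesic $\gamma(t)=\exp_p(tv)$, which is defined for all $t$ precisely because $\exp_p$ is global, satisfies $d_M(\gamma(t),q)=r-t$ for every $t\in[0,r]$. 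To prove this I introduce
\[ A:=\{\,t\in[0,r]:d_M(\gamma(t),q)=r-t\,\}, \]
and show that $A$ is nonempty, closed, and ``open to the right'', whence $A=[0,r]$; then $\gamma(r)=q$ and $L(\gamma|_{[0,r]})=r=d_M(p,q)$, so $\gamma$ is the desired minimizing geodesic.

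\textbf{Step 2 (the continuation that drives Step 1).} Nonemptiness ($\delta\in A$) follows from the local minimizing property near $p$ together with the triangle inequality, and closedness follows from continuity of $d_M$. The real work is the ``open to the right'' property: if $t_0\in A$ and $t_0<r$, I repeat the geodesic-sphere construction around $\gamma(t_0)$, minimize $d_M(\cdot,q)$ over a small sphere $\Sigma_{\delta'}(\gamma(t_0))$ to obtain a point $x_1$, and deduce from the triangle inequality that $d_M(x_1,q)=r-t_0-\delta'$. Concatenating $\gamma|_{[0,t_0]}$ with the short minimizing geodesic from $\gamma(t_0)$ to $x_1$ yields a broken curve of length exactly $d_M(p,x_1)$, hence globally minimizing, hence an \emph{unbroken} smooth geodesic; this forces $x_1=\gamma(t_0+\delta')$ and $t_0+\delta'\in A$. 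I expect this no-corner identification — that a length-minimizing broken geodesic cannot actually have a corner — to be the main obstacle, and it is exactly where the Riemannian structure and the first variation formula enter decisively.

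\textbf{Step 3 (the equivalences and the remaining assertions).} Granting the core lemma, the cycle is short. If $M$ is geodesically complete, then for fixed $p$ each closed ball equals $\exp_p(\{|v|_g\le R\})$, the continuous image of a compact ball in $T_pM$, hence is compact; thus closed and bounded sets, being closed subsets of such balls, are compact. Compactness of closed bounded sets gives metric completeness, since any Cauchy sequence is bounded, therefore has a convergent subsequence, and a Cauchy sequence with a convergent subsequence converges. Finally, metric completeness implies geodesic completeness: a unit-speed geodesic on a maximal interval $[0,T)$ with $T<\infty$ satisfies $d_M(\gamma(s),\gamma(t))\le|s-t|$, so $\gamma(t)$ is Cauchy as $t\uparrow T$ and converges to some $x\in M$; the velocity converges as well, and restarting the ODE \eqref{geo1} at $x$ extends $\gamma$ past $T$, contradicting maximality. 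The assertion that geodesics realise the distance, and that any two points are joined by a minimal geodesic, is then precisely the content of the core lemma.
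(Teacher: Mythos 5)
Your proposal is correct, but there is nothing in the paper to compare it with: the paper states this theorem (Hopf--Rinow) as a classical fact and gives no proof at all, deferring implicitly to the textbooks it cites \cite{Ga,Jo}. What you have written is essentially the standard proof found in those references: the core lemma that if $\exp_p$ is globally defined at a \emph{single} point $p$, then every $q$ is joined to $p$ by a minimizing geodesic, proved via the connectedness argument on $A=\{\,t\in[0,r]:d_M(\gamma(t),q)=r-t\,\}$, with the no-corner rigidity (a broken minimizer is a smooth geodesic, hence coincides with $\gamma$ by uniqueness for the ODE \eqref{geo1}) correctly identified as the crux; then the soft cycle of equivalences. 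Three small points you would need to make explicit in a full write-up: (i) both the nonemptiness of $A$ and the ``open to the right'' step rest on the identity $d_M(z,q)=\delta+\min_{x\in\Sigma_{\delta}(z)}d_M(x,q)$, which holds because every curve from $z$ to $q$ must cross the small geodesic sphere; this requires $q$ to lie outside the corresponding normal ball, i.e. $\delta<r$ in Step~1 and $\delta'<r-t_0$ in Step~2; (ii) in ``metrically complete $\Rightarrow$ geodesically complete'', the phrase ``the velocity converges as well'' needs a sentence of justification: in a coordinate chart around the limit point $x$ the velocity components are bounded (unit speed, metric comparable to the Euclidean one on a compact neighbourhood) and by \eqref{geo1} the acceleration is then bounded too, so the velocity is Lipschitz in $t$ and has a limit as $t\uparrow T$, after which local existence for \eqref{geo1} restarts the geodesic and contradicts maximality; (iii) the clause ``geodesics are curves of shortest length'' in the statement should be read as ``the distance between any two points is realized by a geodesic'' (a great-circle arc of length greater than $\pi$ on the unit sphere is a geodesic but not minimizing), and in that reading it is exactly what your core lemma delivers. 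With those glosses your argument is complete and is the expected proof of the result the paper merely quotes.
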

The connection between the arclength and equation \eqref{eik} can be established as follows. First equations for minimal geodesics are obtained from variation of the length functional. Second Hamilton-Jacobi calculus shows that the length functional satisfies the eikonal equation \eqref{eik}. Since this is known we only sketch the main steps for convenience of the reader.
Setting the variation of the length functional to zero we get
\begin{equation}
L\frac{d}{dr}\left(\frac{1}{L}2g_{ij}\dot x^i \right)+g_{ij,k} \dot x^i \dot x^j=0
\end{equation}
with $L\equiv \sqrt{g_{ij}(x(r))\dot x^i\dot x^j}$ and where we use Einstein summation. Parameterizing by arclength, i.e. setting $L\equiv 1$ (or $r=s$) we get
\begin{equation}
2g_{ij}\ddot x^i +2g_{ij,l}\dot x^l\dot x^i +g_{ij,k} \dot x^i \dot x^j=0
\end{equation}
which, upon multiplcation by $g^{mj}$ (entries of inverse of $(g_{mj})$) and rearranging becomes the geodesic equation \eqref{geo1},\eqref{geo2}. 
In order to show on the other hand that the squared length functional satisfies \eqref{eik} we may consider the length functional 
\begin{equation}
l(r,x,s,y)=\int_r^s L\left( x(u),\dot x(u)\right) du
\end{equation}
and invoke Hamilton-Jacobi calculus. This is done by
introducing the variables $p_i=L_{\dot x^i}$, and the associated Hamiltonian defined by
\begin{equation}
H(x,p)=\dot x^ip_i-L(x,\dot x).
\end{equation}
(here and henceforth we use Einstein summation if convenient).
Then we may write
$$x(t)\equiv x(t;r,x,s,y) \mbox{ and } p(t)\equiv p(t;r,x,s,y),$$
where $x(r;r,x,s,y)=x$ and $x(s;r,x,s,y)=y$. 
and compute
\begin{equation}\label{1ls}
l_s=-H(x(s),p(s)).
\end{equation}
Then we may connect $p$ to $l_{y^k}$ by computing
\begin{equation}\label{ly}
\begin{array}{ll}
l_{y^k}=\int_r^s \left( \frac{\partial \dot x^i}{\partial y^k}p_i+\dot x^i\frac{\partial p_i}{\partial y^k}
-H_{x^i}\frac{\partial x^i}{\partial y^k}-H_{p_i}\frac{\partial p_i}{\partial y^k}\right) dt\\
\\
\int_r^s \stackrel{{\bf \cdot}}{\left( \frac{\partial x_i}{\partial y^k}p_i\right)}dt=\frac{\partial x_i}{\partial y^k}p_i \Big|^s_r=p_k(s;r,x,s,y).
\end{array}
\end{equation}
by invoking the canonical system of equations. This leads to 
\begin{equation}
\frac{\partial l}{\partial s}+\sum_{ij}g^{ij}\frac{\partial l}{\partial y_i}\frac{\partial l}{\partial y_j}=0,
\end{equation}
and a similar equation with respect to the variables $x$. Then we get the equations for $l^2$ and $d^2$, i. e. the equations \eqref{lg} and \eqref{lf} below.

Recall that a minimal geodesic is a global distance minimizing geodesic. This minimal geodesic which connects $x$ and $y$ characterizes the Riemannian distance $d(x,y)$ in an obvious way. Moreover smoothness of $(x,y)\rightarrow d(x,y)$ for smooth diffusion and drift coefficients $a_{ij},b_i$ follows from
the following fact about ordinary differential equations. 
\begin{thm}Let $F:{\mathbb R}^n\times {\mathbb R}^n\rightarrow {\mathbb R}$ be a smooth map. Consider the differential system
\begin{equation}
\frac{d^2x}{dt^2}=F\left( x,\frac{dx}{dt}\right), 
\end{equation}
where $x$ is a map $I\subset {\mathbb R}\rightarrow {\mathbb R}^n$. Then for each point $(x_0,y_0)$
there exists a neighborhood $U\times V$ of this point and $\epsilon >0$ such that for $(x,v)\in U\times V$ equation (2.69) has a unique solution $x_v:]-\epsilon,\epsilon[\rightarrow {\mathbb R}^n$ 
with initial conditions $x_v(0)=x$ and $x_v'(0)=v$. Moreover, the map $X:U\times V\times ]-\epsilon,\epsilon[\rightarrow {\mathbb R}^n$ defined by $(t,x,v)\rightarrow X(t,x,v):=x_v(t)$ is smooth.
\end{thm}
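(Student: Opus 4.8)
The plan is to recognise this as the classical theorem on existence, uniqueness and smooth dependence on initial data for ordinary differential equations, and to obtain it by reducing the second-order system to a first-order one and then invoking a parametrised fixed-point argument. First I would introduce the velocity variable $y=\frac{dx}{dt}$ and rewrite (2.69) as the autonomous first-order system
\begin{equation}
\frac{d}{dt}\begin{pmatrix} x \\ y\end{pmatrix}=G(x,y):=\begin{pmatrix} y \\ F(x,y)\end{pmatrix},\qquad \begin{pmatrix} x(0)\\ y(0)\end{pmatrix}=\begin{pmatrix} x\\ v\end{pmatrix}=:\zeta,
\end{equation}
with $G:\mathbb{R}^{2n}\to\mathbb{R}^{2n}$. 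Since $F$ is smooth, so is $G$; in particular $G$ is locally Lipschitz, which is all that the existence-uniqueness step requires. Writing $\zeta_0=(x_0,y_0)$, it suffices to produce the claimed solution map for the first-order system, the first $n$ components of which then furnish $X(t,x,v)=x_v(t)$.

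Next I would establish local existence and uniqueness, uniformly in the initial datum, by the Banach fixed-point theorem applied to the integral (Volterra) reformulation
\begin{equation}
z(t)=\zeta+\int_0^t G\bigl(z(s)\bigr)\,ds.
\end{equation}
On a closed ball $\bar B$ around $\zeta_0$ the map $G$ is bounded by some $M$ and Lipschitz with constant $K$. Choosing $\epsilon>0$ small enough (depending only on $M$, $K$ and the radius of $\bar B$) the Picard operator $T_\zeta(z)(t)=\zeta+\int_0^tG(z(s))\,ds$ maps the complete metric space $C\bigl([-\epsilon,\epsilon],\bar B\bigr)$ into itself and is a contraction there, with contraction constant bounded away from $1$ uniformly for $\zeta$ in a neighbourhood $W=U\times V$ of $\zeta_0$. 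The resulting unique fixed point $t\mapsto z(t;\zeta)$ is the unique solution with $z(0;\zeta)=\zeta$, which yields both the neighbourhood $U\times V$ and the uniqueness assertion.

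The heart of the proof, and the step I expect to be the main obstacle, is the smoothness of $(t,\zeta)\mapsto z(t;\zeta)$. I would obtain it from the uniform contraction principle for Banach spaces: regarding $\zeta$ as a parameter, the map $(\zeta,z)\mapsto T_\zeta(z)$ is a $C^\infty$ map on $W\times C([-\epsilon,\epsilon],\bar B)$ — smoothness of the superposition (Nemytskii) operator $z\mapsto G\circ z$ follows from the smoothness of $G$ together with the bounds above, while $\zeta$ enters only affinely through the additive term — whereas $T_\zeta$ is a uniform contraction in $z$. The parametrised fixed-point theorem then guarantees that $\zeta\mapsto z(\cdot;\zeta)$ is a $C^\infty$ map of $W$ into $C([-\epsilon,\epsilon],\mathbb{R}^{2n})$. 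Equivalently, one may argue by the more classical route: differentiating the integral equation formally produces the linear variational equation $\frac{d}{dt}\,\partial_\zeta z=DG(z)\,\partial_\zeta z$ with $\partial_\zeta z(0)=\mathrm{Id}$; a Gronwall estimate shows that the difference quotients converge, giving $C^1$ dependence, and since the coefficients $DG(z(\cdot;\zeta))$ are built from derivatives of $G$ along an already $C^1$ flow, an induction on the order of differentiation bootstraps this to $C^\infty$.

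Finally, joint smoothness in $(t,\zeta)$ follows by combining smoothness in $\zeta$ with smoothness in $t$: from the fixed-point equation $z(t;\zeta)=\zeta+\int_0^tG(z(s;\zeta))\,ds$ the $t$-derivative equals $G(z(t;\zeta))$, so $z$ is $C^\infty$ in $t$ with $t$-derivatives depending smoothly on $\zeta$, and a standard argument upgrades separate smoothness to joint smoothness. Reading off the first $n$ coordinates gives the smoothness of $X(t,x,v)=x_v(t)$ on $]-\epsilon,\epsilon[\times U\times V$, completing the plan.
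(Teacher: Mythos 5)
Your proposal is correct, but there is nothing in the paper to compare it against: the paper states this theorem \emph{without proof}, quoting it as a classical fact about ordinary differential equations in order to justify smoothness of $(x,y)\mapsto d^2(x,y)$ (the standard reference would be a Riemannian geometry or ODE textbook such as the cited book of Gallot, Hulin and Ladontaine). Your argument --- reduction of the second-order system to the first-order system $\dot z=G(z)$ with $G(x,y)=(y,F(x,y))$, Picard iteration on $C([-\epsilon,\epsilon],\bar B)$ with a contraction constant uniform in the initial datum, and smooth dependence on $\zeta=(x,v)$ via the uniform (parametrised) contraction principle, or alternatively via the variational equation plus Gronwall and an induction on the order of differentiation --- is precisely the standard textbook proof, so it supplies exactly what the paper delegates to the literature. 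Two small points deserve care. First, the statement as printed has $F:\mathbb{R}^n\times\mathbb{R}^n\to\mathbb{R}$, which must be read as $\mathbb{R}^n$-valued for the system to make sense; you silently (and correctly) do so. Second, your final step invokes ``a standard argument upgrading separate smoothness to joint smoothness''; as a general principle this is false (separate smoothness does not imply joint smoothness), but in your setting it is harmless and should be stated as follows: since $\zeta\mapsto z(\cdot;\zeta)$ is $C^\infty$ into the Banach space $C([-\epsilon,\epsilon],\mathbb{R}^{2n})$ and $\partial_t z=G(z)$, every mixed partial $\partial_t^k\partial_\zeta^\alpha z$ is expressible, by the chain rule and induction on $k$, through derivatives of $G$ evaluated along the flow and through $\zeta$-derivatives of $z$, all of which are jointly continuous in $(t,\zeta)$; this gives joint $C^\infty$ regularity of $X$ directly, without appeal to any generic upgrade principle.
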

Finally we get
\begin{thm}
Let $\Omega \subseteq {\mathbb R}^n$ be some domain. The function $d^2:\Omega\times \Omega \subseteq {\mathbb R}^n\times {\mathbb R}^n\rightarrow {\mathbb R}_+$ (the leading order term of the WKB-expansion of a parabolic equation with diffusion coefficients $a_{ij}$) is the unique function which satisfies the equations
\begin{equation}\label{lf}
d^2=\frac{1}{4}\sum_{ij}d_{x_i}^2a_{ij}d_{x_j}^2,
\end{equation}
\begin{equation}\label{lg}
d^2=\frac{1}{4}\sum_{ij}d_{y_i}^2a_{ij}d_{y_j}^2
\end{equation}
for all $x,y\in {\mathbb R}^n$ and with the boundary condition  
\begin{equation}\label{lh}
d(x,y)=0 \mbox{ iff  $x=y$ for all $x,y\in {\mathbb R}^n$.}
\end{equation}
Moreover, the squareroot $d$ is the Riemannian distance induced by
\begin{equation}
\begin{array}{ll}
d(x,y):=\inf{\Bigg\{}\int_a^b&\sqrt{a^{ij}(\gamma)\stackrel{.}{\gamma}^i\stackrel{.}{\gamma}^j}dt|\gamma:[a,b]\rightarrow {\mathbb R}^n \mbox{ is piecewise } \\
&\mbox{ smooth with $\gamma(a)=x$ and $\gamma(b)=y$}{\Bigg \}}.
\end{array}
\end{equation}
The function $d^2$ is a $C^{\infty}$-function with respect to both variables. 
\end{thm}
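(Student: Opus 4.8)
The plan is to handle the three assertions of the statement---that $d^2$ satisfies the eikonal equations \eqref{lf}, \eqref{lg} together with the boundary condition \eqref{lh}, that it is the \emph{unique} such function, and that it is $C^\infty$ in both arguments---in the order: smoothness first, then the equations, then uniqueness. The reason for this ordering is that the Hamilton--Jacobi derivation sketched above only produces the eikonal equations at points where $d^2$ is differentiable, so the regularity statement must be secured before the equations can be read off classically.

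First I would prove the smoothness. The engine is Theorem 2.2 applied to the geodesic system \eqref{geo1}, \eqref{geo2}, whose right-hand side $-\Gamma^{\lambda}_{\mu\nu}\dot x^\mu\dot x^\nu$ is smooth because the $a_{ij}$, hence the $g_{ij}$ and the Christoffel symbols, are smooth. Theorem 2.2 then yields that the geodesic flow depends smoothly on its initial position and velocity, so the exponential map $\exp_x(v):=X(1,x,v)$ is jointly smooth in $(x,v)$. On the region where the minimizing geodesic from $x$ to $y$ is unique---strictly inside the cut locus---the map $v\mapsto\exp_x(v)$ is a diffeomorphism with smooth inverse $\exp_x^{-1}(y)$, and $d^2(x,y)$ equals the squared Riemannian norm of $\exp_x^{-1}(y)$ in $T_xM$. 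The decisive point is that squaring removes the conical singularity at the diagonal: in geodesic normal coordinates centred at $x$ the expression is $\sum_i (v^i)^2$ with $v=\exp_x^{-1}(y)$, a smooth (leading-order quadratic) function of $v$, whereas $d$ itself is merely Lipschitz at $y=x$. Joint smoothness in $(x,y)$ then follows from the smooth dependence of the flow on the base point as well.

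With differentiability in hand, the eikonal equations follow from the Hamilton--Jacobi calculus sketched above. As the value function of the length (equivalently energy) minimization problem, $d$ satisfies, wherever differentiable, the unit-gradient eikonal equation $\sum_{ij}a_{ij}d_{y_i}d_{y_j}=1$ in the endpoint $y$, and symmetrically in the starting point $x$. Substituting $\partial_{y_i}d^2=2d\,d_{y_i}$ converts these into the stated equations \eqref{lg} and \eqref{lf} for $d^2$, since $\tfrac14\sum_{ij}a_{ij}\,\partial_{y_i}d^2\,\partial_{y_j}d^2 = d^2\sum_{ij}a_{ij}d_{y_i}d_{y_j}=d^2$. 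The boundary condition \eqref{lh} is then immediate: positive definiteness of $g=(a_{ij})^{-1}$ makes $d$ a genuine metric on the connected, geodesically complete manifold, so $d(x,y)=0$ holds exactly on the diagonal.

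Finally, for uniqueness I would invoke the probabilistic/PDE link rather than argue from the first-order system directly. The fundamental solution $p(t,x,y)$ of the parabolic equation \eqref{PPDE} is uniquely determined by its coefficients, and the WKB representation \eqref{WKBrep}---or, independently, Varadhan's theorem---recovers the squared distance as $d^2(x,y)=-\lim_{t\downarrow 0}2t\ln p(t,x,y)$. Hence any function solving \eqref{lf}, \eqref{lg} under \eqref{lh} and playing the role of the leading WKB term must coincide with this limit; a self-contained alternative will come from the local characteristic representation of Section 3. The main obstacle is precisely the \emph{global} smoothness claim: on a general geodesically complete manifold $d^2$ fails to be $C^\infty$ across the cut locus, so the statement as phrased tacitly requires either restriction to the region of unique minimizers or an additional hypothesis (for instance a Cartan--Hadamard condition, under which $\exp_x$ is a global diffeomorphism and the cut locus is empty). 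Pinning down exactly where the smooth inverse $\exp_x^{-1}$ exists, and verifying that the squaring genuinely repairs the diagonal singularity, is the crux of the argument.
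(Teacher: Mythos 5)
Your proposal rests on the same three pillars as the paper's own proof: the Hamilton--Jacobi calculus of Section 2 to derive \eqref{lf} and \eqref{lg}, Varadhan's small-time limit of the fundamental solution combined with uniqueness of that fundamental solution for the strictly parabolic equation to get uniqueness of $d^2$, and Theorem 2.2 (smooth dependence of solutions of second-order ODE systems on initial data) for regularity. The differences are in ordering and in care, and they are worth recording. You prove smoothness first, arguing correctly that the eikonal equations can only be read off classically where $d^2$ is differentiable; the paper does not acknowledge this logical dependence and disposes of smoothness in one closing sentence. More substantially, your elaboration of the regularity step --- smoothness of the geodesic flow, hence of $\exp_x$, hence of $\exp_x^{-1}$ on the region where the minimizing geodesic is unique, with the squared norm in normal coordinates absorbing the conical singularity of $d$ at the diagonal --- exposes a real gap that the paper's one-line argument glosses over: $d^2$ is $C^\infty$ only off the cut locus, and for a general geodesically complete metric (even one given by nonconstant $a_{ij}$ on a domain of ${\mathbb R}^n$) the cut locus need not be empty, so the global $C^\infty$ claim of the theorem tacitly requires either a Cartan--Hadamard-type hypothesis or restriction to the region of unique minimizers. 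That caveat is a defect of the paper's statement and proof, not of your argument; within the region where minimizing geodesics are unique your proof is complete and, on the smoothness point, strictly more rigorous than the paper's.
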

\begin{proof}
The variation of the length functional leads to the geodesic equation. On the other hand, Hamilton-Jacobi calculus leads us to the fact that the squared length functional $d^2$ satisfies the equation \eqref{eik}. It is clear that the squared length functional satisfies both equations \eqref{lf} and \eqref{lg} below. Moreover, it is clear that the squared length functional satisfies the initial condition \eqref{lh}. 
Uniqueness is a bit more subtle. In \cite{V} Varadhan showed that
\begin{equation}\label{V}
d^2(x,y)=\lim_{t\downarrow 0}2t\ln p(t,x,y),
\end{equation}
where $p$ is the fundamental solution of a scalar parabolic equation with diffusion coefficient function $x\rightarrow a_{ij}(x)$. Since $p$ is unique for a strictly parabolic equation $d^2$ is uniquely determined 
by the equation \eqref{V}. On the other hand one knows that for small $t>0$ $\ln p$ has for $C^{\infty}$ coefficients a representation of type \eqref{WKBrep} is valid (cf. \cite{Gi,Ka}). Plugging this into the correspondend parabolic equation leads to the eikonal equation \eqref{eik} which is, hence, satisfied by $d^2$. Moreover we know by $V$ and the fact that the squareroot of $d^2$ is a metric. Hence $d(x,y)=0$ if and only if $x=y$, and the same holds for $d^2$. Hence, we conclude that the global solution $d^2$ of the system of equations \eqref{eik},\eqref{lh} and \eqref{lh} is unique. Moreover, from the preceding theorem we can conclude that the function $(x,y)\rightarrow d^2(x,y)$ is also smooth with respect to both variables.
\end{proof}

\section{Further analysis of the equation for the squared metric $d^2$}

Next we observe that the local representation of the solution of the equations \eqref{lf}, \eqref{lg} with the boundary condition \eqref{lh} has a local representation which starts with the quadratic terms. This will be used in the construction of a global approximation. The analysis presented here gives us two other insights. First, a powere series ansatz leads atmost to local and not to global solutions. Even if there is a local power series representation of the solution at each point of the domain, we do not know how a global solution can be constructed from this information, because we do not know the location of the geodesic the length of which we want to compute. If we knew, then computing the length would be a rather trivial task. Even the derivatives of the length functional would be better computed from the explicit geodesic. However, as we mentioned the nonlinear ordinary differential equation describing the geodesic is harder to solve in general than the eikonal equation. Second, we shall see from an different point of view why the boundary condition \eqref{lh} leads to uniqueness of solutions $(x,y)\rightarrow d^2(x,y)$ of the system \eqref{lf}, \eqref{lg}, and \eqref{lh}.
We have \newpage
\begin{cor}
The local representation $d^2$ satisfying the equations \eqref{lf}, \eqref{lg}, together with the boundary condition \eqref{lh} is of the form
\begin{equation}\label{loc}
\begin{array}{ll}
d^2(x,y)&=\sum_{ij}a^{ij}(y)\Delta x^i\Delta x^j + \sum_{|\alpha < M }\frac{d^2_{\alpha}(y)}{\alpha!}\Delta x^{\alpha}\\
\\
&+\sum_{|\gamma|=M}\int_0^1(1-\theta)^{M-1}\frac{\Delta x^{\gamma}}{\gamma !}\partial^{\gamma}d^2(y+\theta \Delta x,y)d\theta.
\end{array}
\end{equation}
The coefficients $d_{\alpha}(y)$ are uniquely determined by a recursion obtained from the equations \eqref{lf}, \eqref{lg}. In coordinates with second order normal form, i.e. where $d^2$ is $\sum_{ij}\lambda_i(y)\Delta x^i\Delta x^j$ with $\lambda_i(y) , 1\leq i \leq n$ is the spectrum of $(a^{ij}(y))$,
the multiindex recursion is
\begin{equation}\label{rec}
\begin{array}{ll}
 d^2_{\beta}(y)=&\frac{1}{\left(1-\sum_i\beta_i\right)}{\Bigg (} \sum_i  \left(  \lambda^i_0\right)^2\frac{\lambda_i^{\beta\dot -2_i}}{(\beta\dot -2_i)!}1_{\left\lbrace \beta_i\geq 2\right\rbrace }\\
\\
&+\sum_i \sum_{|\alpha|\geq 1,|\gamma | \geq 3, \alpha+\gamma =\beta}\frac{\lambda_i^{\alpha}}{\alpha!}  \lambda^i_0 d^2_{\gamma}(y)\gamma_i\\
\\
&+ \sum_i  \sum_{\alpha\geq 0,|\delta|\geq 3,|\gamma| \geq 3,\alpha+\gamma+\delta\dot-2_i=\beta}\frac{\lambda_i^{\alpha}}{\alpha!}  \delta_i \gamma_i d^2_{\delta}(y)d^2_{\gamma}(y){\Bigg )}.
\end{array}
\end{equation}
This confirms uniqueness. (Note that there is no loss of generality if we choose the normal coordinates for the second order terms). In general the solution is not globally analytic in the sense that $d^2$ is not representable by a globally converging power series.
\end{cor}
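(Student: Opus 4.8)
The plan is to obtain the expansion \eqref{loc} directly from Taylor's theorem and then to read off the recursion \eqref{rec} by substituting the formal Taylor series into the eikonal equation \eqref{lf}. First I would invoke the smoothness of $(x,y)\mapsto d^2(x,y)$ established in Theorem 2.3: fixing $y$ and expanding $x\mapsto d^2(x,y)$ about $x=y$ with integral remainder of order $M$ produces exactly the shape \eqref{loc}. The constant and linear terms vanish because the boundary condition \eqref{lh} forces $d^2(y,y)=0$ and makes $x=y$ a global minimum, so $\partial_{x_i}d^2(y,y)=0$; thus the expansion genuinely begins at second order. To identify the quadratic term I would insert $d^2(x,y)=Q_{ij}(y)\Delta x^i\Delta x^j+O(|\Delta x|^3)$ into \eqref{lf} and match the second-order terms, which yields the matrix identity $Q=QAQ$ with $A=(a_{ij}(y))$, whence $Q=A^{-1}=(a^{ij}(y))$. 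Diagonalising this positive definite form brings us to the second order normal coordinates in which the quadratic part is $\sum_i\lambda_0^i(\Delta x^i)^2$, the $\lambda_0^i$ being the eigenvalues of $(a^{ij}(y))$.

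Next I would derive \eqref{rec}. Writing $d^2=\sum_\beta \frac{d^2_\beta}{\beta!}\Delta x^\beta$ and expanding each coefficient function $a_{ij}(x)$ in its own Taylor series about $y$, I would substitute both into \eqref{lf} and collect the coefficient of a fixed monomial $\Delta x^\beta$. The decisive observation is that the top coefficient $d^2_\beta$ occurs linearly on the right-hand side exactly once, namely when the leading quadratic gradient $2\lambda_0^i\Delta x^i$ is paired, through the constant part $a_{ii}(y)=1/\lambda_0^i$, with the order-$|\beta|$ gradient; this pairing contributes the multiple $|\beta|$ of the top coefficient, so that after moving it to the left the top coefficient can be solved for and the prefactor $1/(1-\sum_i\beta_i)$ of \eqref{rec} appears, nonzero precisely because $|\beta|\ge 2$. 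All remaining contributions involve only coefficients $d^2_\gamma$ with $|\gamma|<|\beta|$. The three sums in \eqref{rec} are then just the three ways of forming the quadratic expression $\tfrac14\sum_{ij}d^2_{x_i}a_{ij}d^2_{x_j}$ out of the expansion: the leading/leading pairing, convolved against the Taylor coefficients of $a_{ii}$, gives the $(\lambda_0^i)^2$ term; the leading/higher pairing gives the mixed term carrying the single factor $\lambda_0^i\,d^2_\gamma\gamma_i$; and the higher/higher pairing gives the bilinear term in $d^2_\delta,d^2_\gamma$, with the multiindex $\alpha$ ranging over the Taylor coefficients of the coefficient function in each case. Equation \eqref{lg} plays the symmetric role in the second slot and merely guarantees consistency.

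Uniqueness then follows at once and confirms Theorem 2.3 from the local viewpoint: the order-two data $\lambda_0^i$ are fixed by the equation itself, and \eqref{rec} determines every $d^2_\beta$ with $|\beta|\ge 2$ recursively and without ambiguity from coefficients of strictly smaller order, while the remainder in \eqref{loc} is determined by $d^2$ itself. Hence any smooth solution of \eqref{lf}, \eqref{lg}, \eqref{lh} has a uniquely prescribed jet at every point, so at most one such solution can exist.

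Finally, for the non-analyticity I would argue that \eqref{rec}, although it produces all derivatives, supplies no bound ruling out factorial growth of the $d^2_\beta$, so the formal series need not converge; more conceptually, the representation is genuinely local because $d^2$ develops singularities on the cut locus (for instance on the round sphere $d^2$ fails to be smooth at antipodal points), whereas a globally convergent power series would be real-analytic everywhere, which $d^2$ is not. The main obstacle throughout is the multiindex bookkeeping needed to pin down \eqref{rec} exactly: one must organise the Cauchy products of the three Taylor series, verify that only the advertised convolutions land in degree $\beta$, and check, most delicately, that in the normal coordinates the linear occurrence of $d^2_\beta$ collapses to the clean Euler factor $1-\sum_i\beta_i$ without the off-diagonal contributions disturbing it.
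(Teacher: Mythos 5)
Your proposal follows, in outline, the same strategy as the paper's proof: Taylor expansion of $x\mapsto d^2(x,y)$ about $x=y$ with integral remainder, elimination of the constant and linear terms via the boundary condition, identification of the quadratic term as $\sum_{ij}a^{ij}(y)\Delta x^i\Delta x^j$, and then substitution of the power series into the eikonal equation in second-order normal coordinates to extract the Euler factor $1-\sum_i\beta_i$ and the three convolution sums of \eqref{rec}; your description of the leading/leading, leading/higher and higher/higher pairings is exactly the paper's computation. Two sub-steps genuinely differ. For the vanishing of the linear term you use that $x=y$ is a global minimum of $x\mapsto d^2(x,y)$ (legitimate, since non-negativity of $d^2$ follows from \eqref{lf} together with positive definiteness of $(a_{ij})$, or from the codomain $\mathbb{R}_+$ in Theorem 2.3), whereas the paper runs a sign-change contradiction: if $\nabla d^2(y,y)\neq 0$, the bound $R(x,y)\leq C\|\Delta x\|^2$ on the remainder produces a second zero $x'\neq y$, violating the ``iff'' in \eqref{lh}. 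For the quadratic term you match second-order coefficients to get $Q=QAQ$ and conclude $Q=A^{-1}$, whereas the paper observes that $\sum_{ij}a^{ij}(y)\Delta x^i\Delta x^j$ solves the frozen-coefficient equation and invokes the uniqueness of Theorem~2.3 (while remarking, as you in effect do, that this appeal is avoidable); your route is more self-contained and does not rest on the Varadhan-based uniqueness argument.

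The one point you should repair: $Q=QAQ$ alone does not give $Q=A^{-1}$; it only says $QA$ is idempotent, and degenerate solutions exist ($Q=0$, or any symmetric rank-deficient $Q$ with $QA$ a projection). To exclude these you need an additional argument — e.g. a lowest-nonvanishing-term analysis showing the leading homogeneous term must have degree exactly $2$, combined with the ``iff'' part of \eqref{lh} to rule out a kernel of $Q$ — which is precisely the nondegeneracy the paper obtains for free by citing Theorem 2.3. Be aware also that your closing inference (``uniquely prescribed jet at every point, so at most one such solution'') contains the same leap as the paper's ``this confirms uniqueness'': agreement of all Taylor coefficients along the diagonal does not by itself exclude differences that are flat at the diagonal, and since the corollary itself stresses that solutions need not be analytic, the recursion yields uniqueness of jets, not by itself uniqueness of smooth solutions. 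Your non-analyticity discussion is fine and in part stronger than the paper's: the coefficient-growth remark is the paper's argument, and the cut-locus example is a sound conceptual supplement (the paper instead points to the constant-coefficient case, where it is $d$, not $d^2$, that fails to admit a Taylor expansion at $y$).
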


\begin{proof}
A smooth solution $d^2$ of the eikonal equation has the representation
\begin{equation}
\begin{array}{ll}
d^2(x,y)&=d(y,y) + \nabla d(y,y)\cdot (x-y)\\
\\
&+\sum_{|\gamma|=2}\int_0^1(1-\theta)^{1}\frac{\Delta x^{\gamma}}{\gamma !}\partial^{\gamma}d^2(x+\theta \Delta x,y)d\theta.
\end{array}
\end{equation}
We abbreviate  $R(x,y)=\sum_{|\gamma|=2}\int_0^1(1-\theta)^{1}\frac{\Delta x^{\gamma}}{\gamma !}\partial^{\gamma}d^2(x+\theta \Delta x,y)d\theta.$
Since $d(y,y)=0$ we have
\begin{equation}
\begin{array}{ll}
d^2(x,y)=\nabla d^2(y,y)\cdot (x-y)+R(x,y)
\end{array}
\end{equation}
The 'only if'-condition of the boundary condition leads to $\nabla d^2(y,y)=0$. To see this assume that
$\nabla d^2(y,y)\neq 0$. Since $R(x,y)\leq C\|\Delta x\|^2$ there is a small $\Delta x$ such that 
$\nabla d^2(y,y)\cdot \mu\Delta x >C\|\Delta x\|^2$ and $\nabla d^2(y,y)\cdot (-\mu)\Delta x <-C\|\Delta x\|^2$ for some $\mu\in (0,1]$. Hence there exists some $\rho$ such that with $x':=y+(\rho\mu)\Delta x$
\begin{equation}
d^2(x',y)=\nabla d^2(y,y)\cdot (\rho\mu)\Delta x + R(x',y)=0,
\end{equation}
contradicting one part of the boundary condition $d^2(x,y)=0~\mbox{ iff }~x=y$.
Next one computes that $\sum_{ij}a^{ij}(y)\Delta x^i\Delta x^j$ satisfies the equation
\begin{equation}
d^2(x,y)=\frac{1}{4}\sum_{ij}a_{ij}(y)d^2_{x_i}d^2_{x_j},
\end{equation}
and the uniqueness of theorem 2.3. (which we established by arguing with uniqueness of related diffusions and Varadhan's result, in the Atiyah-Singer spirit of short-range analytic expansions) identifies the coefficients $a^{ij}(y)$ as the second order terms of local representations around $y$. Having obtained this the representation \eqref{loc} is just a multivariate version of Taylor's theorem. Note, however, that we do not need to invoke the uniqueness of theorem 2.3. but just consider a recursion obtained from a power series ansatz starting with second order terms. However, this would complicate the matter a bit so we take advantage that we know the second order terms of a local representation by the preceding argument. Finally we have to establish the recursion in \eqref{rec}. The recursion shows directly that the higher order coefficients $d^2_{\beta}(y)$ for $|\beta|\geq 3$ are uniquely determined. Moreover, it is clear from \eqref{rec} that in general the convergence radius of the full power series is small (if not zero). Hence in general there is no globally analytic solution
 the function $d^ 2:\Omega\times \Omega \rightarrow {\mathbb R}$ globally analytic if for each $y\in \mathbb{R}^n$
the Taylor expansion of $d^2$ at $y\in \mathbb{R}^n$ and $x\in \mathbb{R}^n$ 
equals $d^2$ globally, i.e.
\begin{equation}\label{geolgth}
d^ 2(x,y)=\sum_{\alpha}\frac{\partial_{\alpha}d^2(y)}{\alpha !}(x-y)^ {\alpha}~~\mbox{forall}~x,y\in \mathbb{R}^n. 
\end{equation}
Invoking the implicit function theorem equation \eqref{eik} is equivalent to
\begin{equation}\label{eik2}
d^2=\frac{1}{4}\sum_{i}\lambda_{i}(x)d^2_{x_i}d^2_{x_i},
\end{equation}
where $\lambda_i(x), 1\leq i\leq n$ is the spectrum of the positive $(a_{ij}(x))$. Since $d^2_{x_i}=2d d_{x_i}$ this is equivalent to
\begin{equation}\label{eik3}
1=\sum_{i}\lambda_{i}(x)d_{x_i}d_{x_i}
\end{equation}
The latter equation is easier but there is no Taylor expansion around $y$ as can be seen in the case of constant coefficients (and hence constant eigenvalues $\lambda$), where the solution is
\begin{equation}
d(x,y)=\sqrt{\sum_{i=1}^n \frac{\Delta x_i^2}{\lambda_i}}
\end{equation}
\begin{rem}
We use equation \eqref{eik2} mainly for the theoretical purposes of this corollary. In general it cannot be in general used for numerical purposes since this would imply that we have an efficient procedure to compute the eigenvalue functions of a space dependent matrix. Since we are looking for high precision in this paper, this is not possible in general. An exception is the case of dimension $n=2$ where we have
\begin{equation}
\lambda_{1,2}(x)=\frac{\mbox{tr}(A)(x)}{2}\pm\sqrt{\left( \frac{\mbox{tr}(A)(x)}{2}\right) ^2-\mbox{det}(A)(x)}
\end{equation}
where $A(x)=(a_{ij}(x))$.
\end{rem}
Next we plug in the power series expansion
\begin{equation}
d^2(x,y)=\sum_{i=1}^n\lambda^i_0\Delta x_i^2 +\sum_{|\beta |\geq 3 } d^2_{\beta}(y) \Delta x^{\beta}
\end{equation}
We have 
\begin{equation}
d^2_{x_i}=2\lambda_0^i(y)\Delta x_i +\sum_{|\beta |\geq 3 } d^2_{\beta}(y) \beta_i \Delta x^{\beta \dot -1_i},
\end{equation}
where for any multiindex $\beta$ we define
\begin{equation}
\beta\dot-1_i=(\beta_1,\cdots,\beta_i,\cdots \beta_n)\dot-1_i:=\left\lbrace \begin{array}{ll} (\beta_1,\cdots,\beta_i-1,\cdots \beta_n) \mbox{ if } \beta_i\geq 1\\ (\beta_1,\cdots,0,\cdots \beta_n)~~\mbox{ else }  \end{array}\right.
\end{equation}
The term $\beta -2_i$ is defined analogously.
Plugging in the power series ansatz and using the relation $\lambda^0_i\left(  \lambda^i_0\right)^2=\lambda^0_i$, this leads to

\begin{equation}
\begin{array}{ll}
&\left( \sum_{|\beta |\geq 3} d^2_{\beta}(y)\Delta x^{\beta}\right) \left(1-\sum_i\beta_i \lambda_0^i\lambda^0_i\right) \\
\\
=&\left( \sum_{|\beta |\geq 3} d^2_{\beta}(y)\Delta x^{\beta}\right) \left(1-\sum_i\beta_i\right)\\ 
\\
=&\left( \sum_i  \sum_{|\alpha|\geq 1}\frac{\lambda_i^{\alpha}}{\alpha!}\Delta x^{\alpha}\right)\left(  \lambda^i_0\right)^2\Delta x_i^2+\\
\\
&+\left(\sum_i \sum_{|\alpha|\geq 1}\frac{\lambda_i^{\alpha}}{\alpha!}\Delta x^{\alpha} \right)\left( \lambda^i_0\sum_{|\beta| \geq 3}d^2_{\beta}(y)\beta_i\Delta x^{\beta}\right) \\
\\
&+\left( \sum_i  \sum_{\alpha}\frac{\lambda_i^{\alpha}}{\alpha!}\Delta x^{\alpha}\right)\times\\
\\
&\left(  \sum_{|\beta|\geq 3,|\gamma|\geq 3}\beta_i \gamma_i d^2_{\beta}(y)d^2_{\gamma}(y)\Delta x^ {\beta\dot-1}\Delta x^ {\gamma\dot-1}\right).
\end{array}
\end{equation}
This leads to 
\begin{equation}
\begin{array}{ll}
&\sum_{|\beta |\geq 3} d^2_{\beta}(y)\Delta x^{\beta}\\ 
\\
=&\frac{1}{\left(1-\sum_i\beta_i\right)}{\Bigg (}\left( \sum_i  \sum_{|\alpha|\geq 1}\frac{\lambda_i^{\alpha}}{\alpha!}\Delta x^{\alpha}\right)\left(  \lambda^i_0\right)^2\Delta x_i^2+\\
\\
&+\left(\sum_i \sum_{|\alpha|\geq 1}\frac{\lambda_i^{\alpha}}{\alpha!}\Delta x^{\alpha} \right)\left( \lambda^i_0\sum_{|\beta| \geq 3}d^2_{\beta}(y)\beta_i\Delta x^{\beta}\right) \\
\\
&+\left( \sum_i  \sum_{\alpha}\frac{\lambda_i^{\alpha}}{\alpha!}\Delta x^{\alpha}\right)\times\\
\\
&\left(  \sum_{|\beta|\geq 3,|\gamma|\geq 3}\beta_i \gamma_i d^2_{\beta}(y)d^2_{\gamma}(y)\Delta x^ {\beta\dot-1}\Delta x^ {\gamma\dot-1}\right){\Bigg )}.
\end{array}
\end{equation}
Simplifying and renaming multiindices in order to collect for multiindices of order $\beta$ we get
\begin{equation}
\begin{array}{ll}
&\sum_{|\beta |\geq 3} d^2_{\beta}(y)\Delta x^{\beta} \\ 
\\
=&\frac{1}{\left(1-\sum_i\beta_i\right)}{\Bigg (} \sum_i  \sum_{|\alpha|\geq 1}\left(  \lambda^i_0\right)^2\frac{\lambda_i^{\alpha}}{\alpha!}\Delta x^{\alpha+2_i}+\\
\\
&+\sum_i \sum_{|\alpha|\geq 1}\sum_{|\gamma| \geq 3}\frac{\lambda_i^{\alpha}}{\alpha!}  \lambda^i_0 d^2_{\gamma}(y)\gamma_i\Delta x^{\alpha +\gamma} \\
\\
&+ \sum_i  \sum_{\alpha}\sum_{|\delta|\geq 3,|\gamma|\geq 3}\frac{\lambda_i^{\alpha}}{\alpha!}  \delta_i \gamma_i d^2_{\delta}(y)d^2_{\gamma}(y)\Delta x^ {\alpha+\gamma+\delta\dot-2_i}{\Bigg )}.
\end{array}
\end{equation}
The latter equation leads directly to \eqref{rec}.
\end{proof}

Let us draw some consequences out of our theoretical considerations. There is neither an explicit solution nor leads a power series ansatz to a global solution in general. Neither does it help to have local solutions in terms of power series.  Such representations are not sufficient for our purposes, since we are interested in a global solution for $x\rightarrow d^2(x,y)$ and do not know the intermediate points on the corresponding geodesic in order to compute the global $d^2$ by means of local power series representations.
This motivates our later construction of regular polynomial interpolation of $d^2$ as seemingly unavoidable.

\section{Regular polynomial interpolation algorithm for the Riemannian metric and its derivatives}
For the moment let us denote again an interpolation polynomial which approximates the squared Riemannian distance $d^2$ in the $L^p$-sense on some bounded domain $\Omega$ by $q_{0,p}^2$ and one that approximates the squared Riemannian distance $d^2$ in the $H^{s,p}$-sense (again on $\Omega$) by $q_{s,p}^2$. How can we check that a given polynomial is an approximation in either sense? 
The equation \eqref{eik} gives us itself a hint how an approximation $q_{s,p}^2$ of $d^2$ performs.
In order to obtain the $L^p$ error of an $L^p$ approximation $q^2_{0,p}$ of $d^2$ we may plug in the approximation
$q_{s,p}^2$ into the right side of equation \eqref{eik} and subtract the left side, i.e. we compute
\begin{equation}
\frac{1}{4}\sum_{ij}a_{ij}(x)\frac{\partial q^2_{0,p}}{\partial x_i}\frac{\partial q^2_{0,p}}{\partial x_j}-q^2_{0,p}=r_{0,p}(x),
\end{equation}
We shall see that $r_{0,p}\in O(h^3)$ locally (with $h$ the mesh size of the interpolation points) implies that
\begin{equation}
\|d(x,y)-q_{0,p}\|_{L^p(\Omega)}
\end{equation}
converges to zero as the number of interpolation points $N$ goes to infinity in such a way that the mesh size of the set of interpolation points $h$ goes to zero. Note that $q_{0,p}$ denotes the squareroot of $q_{0,p}^2$.
We call an approximation $q^2_{s,p}$ an $H^{s,p}$-approximation if it approximates not only $d^2$ in the $L^p$ sense but can be plugged in into all the derivatives of \eqref{eik} of order $m$ (i.e. multivariate derivatives $\alpha$ for $|\alpha|\leq  m$ of the eikonal equation) such that in
\begin{equation}
\frac{\partial^{\alpha}}{\partial x^{\alpha}}\left(\frac{1}{4}\sum_{ij}a_{ij}(x)\frac{\partial q^2_{0,p}}{\partial x_i}\frac{\partial q^2_{0,p}}{\partial x_j} \right)-\frac{\partial^{\alpha}}{\partial x^{\alpha}}d^2(x,y)=:r_{\alpha ,p}
\end{equation}
the right side  staisfies $r_{0,p}\in O(h^{3+m})$ locally  implies that
\begin{equation}
\|d(x,y)-q_{0,p}\|_{H^{s,p}(\Omega)}
\end{equation}
converges to zero as the number of interpolation points $N$ goes to infinity in such a way that the mesh size of the set of interpolation points $h$ goes to zero.
Accordingly, we call such $q_{0,p}^2$ ($q_{s,p}^2$) an $L^p$- ($H^{s,p}$) approximation of the boundary value problem \eqref{eik}. 
In the next subsection we construct a $L^p$-approximation and refine the construction in the following subsection in order to construct $H^{s,p}$-approximations.    
\subsection{Polynomial interpolation of eikonal equation in $L^ p$ sense}

We may write the eikonal equation \eqref{eik}
\begin{equation}\label{eik0}
d^2(x,y)=\frac{1}{4}\sum_{ij}a_{ij}d^ 2_{x_i}d^ 2_{x_j}=\frac{1}{4}\left\langle \nabla d^2, A\nabla d^2\right\rangle. 
\end{equation}
Assume that $A=(a_{ij})$ is constant. The solution \eqref{eik} with the boundary condition $d^2(x,y)=0$ iff $x=y$ is
\begin{equation}
d^2(x,y)=\left\langle \Delta x, A^ {-1}\Delta x\right\rangle,
\end{equation}
where $\Delta x=(x-y)$, and  $A^ {-1}=:(a^ {ij})$ denotes the inverse of the matrix $A$.
This is easily verified by observing that
\begin{equation}
\nabla d^ 2= 2 A^ {-1}x.
\end{equation}
Define
\begin{equation}
d^ 2_{A^ {-1}(x^j)}(x,y)=\sum_{ml} a^ {lm}(x_j)(x^ l-y^l)(x^ m-y^m),~~j=0,\cdots,N
\end{equation}
we get the first recursively defined approximation algorithm for the Riemannian distance based on $N+1$ interpolation points $x^0=y, x^1, x^2, \cdots ,x^N$.
Note that the squared distance is a function
\begin{equation}
d^ 2:\Omega \times \Omega \subseteq {\mathbb R}^n\times {\mathbb R}^n \rightarrow {\mathbb R}_+,
\end{equation}
where we define ${\mathbb R}_+:=\left\lbrace x| x\geq 0\right\rbrace $.
There are several ways to approximate the function $d^2$. 
In order to approximate this function we approximate first the function $x\rightarrow d^2(x,y)$, then the function 
$x\rightarrow d^ 2(x, x^1)$ and so on up to $x\rightarrow d^ 2(x, x^N)$.

We start with the approximation of $x\rightarrow d^2(x,y)$.
First define 
\begin{equation}
d_{00}^2(x,y)=d^ 2_{A^ {-1}(y)}(x,y)
\end{equation}
Next define
\begin{equation}
d_{10}^2(x,y)=d^ 2_{A^ {-1}(y)}(x,y)+c_{10}\Pi_{l=1}^n(x_l-y_l)^2d^ 2_{A^ {-1}(x_1)}(x,y),
\end{equation}
and determine a real number $c_{10}$ such that
\begin{equation}
d_1^2(x_1,y)=\frac{1}{4}\sum_{ij} a_{ij}(x_1) d^ 2_{1,x_i}(x_1,y)d^ 2_{1,x_j}(x_1,y),
\end{equation}
i.e. the eikonal equation \eqref{eik} with respect to $x$ and fixed parameter $y$ is satisfied at $x_1$.
Proceeding we get a series $d^2_{10}, d^2_{20}, \cdots, d^2_{k0},\cdots $ of approximations of the form
\begin{equation}
d_{k0}^2(x,y)=d^ 2_{A^ {-1}(y)}(x,y)+\sum_{j=1}^kc_{j0}\Pi_{r=0}^j\Pi_{l=1}^n(x_l-x^r_l)^2d^ 2_{A^ {-1}(x^j)}(x,y).
\end{equation}
Having determined the real numbers $c_{10},\cdots c_{(k-1)0}$ we obtain the real number $c_{k0}$ by solving
\begin{equation}\label{eikatk}
d_{k0}^2(x_k,y)=\frac{1}{4}\sum_{ij} a_{ij}(x_k) d^ 2_{k0,x_i}(x_k,y)d^ 2_{k0,x_j}(x_k,y).
\end{equation}
for $c_{k0}$.
Continuing this procedure for $N$ interpolation points we get a polynomial of the form
\begin{equation}\label{dN1}
d_{N0}^2(x,y)=d^ 2_{A^ {-1}(y)}(x,y)+\sum_{j=1}^Nc_{j0}\Pi_{r=0}^j\Pi_{l=1}^n(x_l-x^r_l)^2d^ 2_{A^ {-1}(x^j)}(x,y).
\end{equation} 
with $N$ real numbers $c_{j0}$ obtained recursively by plugging in $d^2_{j0}$ with one degree of freedom $c_{j0}$ into \eqref{eikatk}. 

Analogous constructions are done to approximate $x\rightarrow d^2(x,x^j)$ for $k=1,\cdots ,N$ with
\begin{equation}\label{dNk}
d_{Nk}^2(x,x^k)=d^ 2_{A^ {-1}(y)}(x,x^k)+\sum_{j=1}^Nc_{jk}\Pi_{r=0}^j\Pi_{l=1}^n(x_l-x^r_l)^2d^ 2_{A^ {-1}(x^j)}(x,x^k),
\end{equation} 
with $c_{jk}$ computed analogously.
The construction of the functions $d^2_{N0},\cdots , d^2_{NN}$ suffices to approximate $d^2$ (we do not need to synthesize these functions into one function, for example by a Lagrangian polynomial interpolation). Note that for $j=0,\cdots N$ the function $d^2_{Nk}$ satisfies the equation
\begin{equation}\label{bdpk}
\begin{array}{ll}
d^2(x,x^k)=\frac{1}{4}\sum_{ij}a_{ij}(x)d^2_{x_i}(x,x^k)d^2_{x_j}(x,x^k)\\
\\
\mbox{ with boundary condition }\\
\\
d^2(x,x^k)=0~\mbox{ iff }~x=x^k.  
\end{array}
\end{equation}
at all interpolation points $x^0,\cdots x^N$ by construction.
\begin{rem}
Note that in the preceding construction no restrictions on the choice of the interpolation points are made.
This does not mean that one may search for an optimal choice of interpolation points and improve efficiency and convergence. We are free to choose a certain set of interpolation points (for example Chebyshev nodes).
But these are purely computational aspects which will be exploited elsewhere.
\end{rem}

\begin{rem}
Note that we have constructed an approximation of the squared metric $d^2$. The metric $d$ is then approximated naturally by the squareroot of the approximation of the squared metric, i.e. we consider the function
\begin{equation}
x\rightarrow d_{Nk}(x,x^k):=\sqrt{d^2_{Nk}(x,x^k)}
\end{equation}
to be the approximation of the metric function $x\rightarrow d(x,x^k)$.
\end{rem}

\subsection{Construction of $H^{s,p}$-approximations}

We refine the construction of the preceding section by construction of an approximation which solves not only \eqref{eik}, (or the set of equations \eqref{lf}, \eqref{lg} with boundary conditions \eqref{lh}), but also all multivariate derivatives of \eqref{eik} up to a given order $m$ at the interpolation points. It turns out then that these polynomials are $H^{s,p}$-approximations for $s\leq m$. The approximation is constructed recursively again. 
For a multiindex $\beta$ of order $|\beta|=m\geq 3$ we denote the approximations of order $ d^2_{M(\beta_m)^{n,N}}$ or just $d^2_{M(\beta_m)}$ if we do not want to refer to the number of interpolation points $N$ and the dimension of the problem $n$ explicitly. The choice of the mesh is free again (in principle). We just assume that a set $\left\lbrace x_1,\cdots ,x_N \right\rbrace$ of interpolation points is given. Again we may construct functions
$x\rightarrow d^2_{M(\beta)0}(x,y)$, $x\rightarrow d^2_{M(\beta)0}(x,x^1)$,..., and $x\rightarrow d^2_{M(\beta)0}(x,x^N)$. We shall construct the first function $x\rightarrow d^2_{M(\beta)0}(x,y)$ for arbitrary multiindex $\beta$. The other functions can be constructed completely analogously.
We start with the $L^p$-approximation.
\begin{equation}
d_{N0}^2(x,y)=d^ 2_{A^ {-1}(y)}(x,y)+\sum_{j=1}^Nc_{j0}\Pi_{r=1}^j\Pi_{l=1}^n(x_l-x^r_l)^2d^ 2_{A^ {-1}(x_j)}(x,y),
\end{equation} 
where the numbers $c_{j1}$ have been determined according to section 4.1..
Next we define $d^2_{M(\beta)0}(x,x^N)$ for multiindices of order $|\beta|=3$. Let $\beta^0,\cdots, \beta^k,\cdots ,\beta^{R}$ a list of multiindices of order $3$. The length $R$ of this list is dependent of the dimension $n$ of course. Start with $\beta^0=(\beta^0_1,\cdots,\beta^0_n)$ and let $\gamma^0$ be an multiindex with $|\gamma|=2$ such that $\beta^0-\gamma =1_i$ for some index $i$. Define (recall that $x^0=y$) 
\begin{equation}
d^2_{\beta^0 0}(x,y)=d^2_{N0}(x,y)+\frac{1}{\beta^0!}c_{\beta^0}^0(x-y)^{\beta^0}.
\end{equation}
Then plug $d^2_{\beta^0 0}(x,y)$ into the equation
\begin{equation}\label{beta0}
\begin{array}{ll}
\partial^{(\beta^0-\gamma^0)}_xd^2(x,y)=\partial^{(\beta^0-\gamma^0)}_x\left(\frac{1}{4}\sum_{ij}a_{ij}(x)\frac{\partial d^2}{\partial x_i}\frac{\partial d^2}{\partial x_j} \right),
\end{array}
\end{equation}
evaluate at $x=x^0=y$ and solve for the real number $c_{\beta^0}^0$. Then proceed recursively: having defined the function $x\rightarrow d^2_{\beta^0 (k-1)}(x,y)$ define
\begin{equation}
d^2_{\beta^0 k}(x,y)=d^2_{\beta^0 (k-1)}(x,y)+c_{\beta^0}^k\Pi_{l=0}^{k-1}(x-x^l)^{\beta^0+{\bf 1}}\frac{1}{\beta^0!}(x-x^k)^{\beta^0},
\end{equation}
where ${\bf 1}=(1,1,\cdots,1)$.
Then plug $d^2_{\beta^0 k}(x,y)$ into the equation \eqref{beta0}, evaluate at $y$, and solve for $c_{\beta^0}^k$. When $k=N$ we have got the approximation 
\begin{equation}
d^2_{\beta^0 N}(x,y)=d^2_{N0}(x,y)+\sum_{k=0}^Nc_{\beta^0}^k\Pi_{l=0}^{k-1}(x-x^l)^{\beta^0+{\bf 1}}\frac{1}{\beta^0!}(x-x^k)^{\beta^0}.
\end{equation}
with $N+1$ real numbers $c_{\beta^0}^k$ for $0\leq k\leq N$ determined recursively. Note that the function
$x\rightarrow d^2_{\beta^0 k}(x,y)$ satisfies the equations \eqref{eik} and \eqref{beta0} at all interpolation points $x^0,\cdots,x^N$. Then we take the next multiindex $\beta^1$ from the list of multiindices of order $3$ (i.e. $|\beta^1|=3$) where we may assume that $\beta^1-\gamma^1=1_k$ for some multiindex $\gamma^1$ with $|\gamma^1|=2$ and some index $k$. An analogous construction as in the case of $\beta^0$ can be done. The only difference is that we start with $d^2_{\beta^0 N}(x,y)$ instead of $d^2_{N0}(x,y)$. We get an approximation of the form
\begin{equation}
d^2_{\beta^1 N}(x,y)=d^2_{\beta^0 k}(x,y)+\sum_{k=0}^Nc_{\beta^1}^k\Pi_{l=0}^{k-1}(x-x^l)^{\beta^1+1}\frac{1}{\beta^1!}(x-x^k)^{\beta^1}.
\end{equation}
where the real numbers are computed recursively by plugging the current approximation into the equation 
\begin{equation}\label{beta0}
\begin{array}{ll}
\partial^{(\beta^1-\gamma^1)}_xd^2=\partial^{(\beta^1-\gamma^1)}_x\left(\frac{1}{4}\sum_{ij}a_{ij}(x)\frac{\partial d^2}{\partial x_i}\frac{\partial d^2}{\partial x_j} \right),
\end{array}
\end{equation}
evaluating at the current interpolation point and solving for the currently undetermined real number $c_{\beta^1}^k$. 
Doing this for all the multiindices of order $3$ in the list above we get the approximation
\begin{equation}
d^2_{M(\beta_3)}(x,y):=d^2_{\beta^N N}(x,y).
\end{equation}
Note that by construction the function $x\rightarrow d^2_{M(\beta_3)}(x,y)$ satisfies the equation \eqref{eik} and all its first order derivative equations
\begin{equation}
\partial^{i}_xd^2=\frac{1}{4}\partial^{i}_x\left( \sum_{lm}a_{lm}(x)d^2_{x_l}d^2_{x_m}\right) ,~~1\leq i\leq n,
\end{equation}
at all interpolation points $x^0=y,x^1,\cdots, x^N$. This completes the stage of construction for multiindices of order $3$. Next assume that the construction for the approximation 
\begin{equation}
x\rightarrow d^2_{M(\beta_m)}(x,y)
\end{equation}
of order $m$ has been completed. Then we may list the multiindices of order $m+1$, i.e. consider a list of multiindices $\delta^0,\delta^1,\cdots, \delta^{R_{m+1}}$ such that $|\delta|=m+1$. The procedure is then quite similar as in the stage for multiindices of order $3$. Therefore we give a very short description. Starting with the multiindex $\delta^0$ there is a multiindex $\beta^k$ of order $m$ (i.e. $|\beta^k|=m$) such that $\delta^0-\beta^k=1_i$ for some index $i$. Then we get successive approximations
\begin{equation}
d^2_{\delta^0 k}(x,y)=d^2_{M(\beta_m)}(x,y)+\sum_{r=0}^{k}c_{\delta^0}^r\Pi_{l=0}^{r-1}(x-x^l)^{\delta^0+{\bf 1}}\frac{1}{\delta^0!}(x-x^r)^{\delta^0},
\end{equation}
where the real numbers $c_{\delta^0}^k$ are succesively determined by plugging in the function $x\rightarrow d^2_{\delta^0 k}(x,y)$ into the equation 
\begin{equation}
\partial^ {\beta^k}d^2=\frac{1}{4}\partial^ {\beta^k}\left( \sum_{lm}a_{lm}(x)d^2_{x_l}d^2_{x_m}\right) ,
\end{equation}
evaluated at the interpolation point $x^k$ (Note that $\partial^{\beta^k}=\partial^ {\delta^0-1_i}$).
After $N+1$ steps we get the approximation function $x\rightarrow d^2_{\delta^0 N}(x,y)$. Having defined
$x\rightarrow d^2_{\delta^l N}(x,y)$ for $l=0,\cdots p-1$ the next multiindex $\delta^r$ may be such that there is an multiindex $\beta^h$ of order $m$ such that $\delta^{r}-\beta^h=1_i$ for some index $i$. We may then define $x\rightarrow d^2_{\delta^p k}(x,y)$
\begin{equation}
d^2_{\delta^p k}(x,y)=d^2_{M(\beta_m)}(x,y)+\sum_{r=0}^{k}c_{\delta^p}^r\Pi_{l=0}^{r-1}(x-x^l)^{\delta^p+{\bf 1}}\frac{1}{\delta^p!}(x-x^r)^{\delta^p},
\end{equation}
and determine the constants $c_{\delta^p}^r$ by plugging in the function $x\rightarrow d^2_{\delta^r k}(x,y)$ into the equation 
\begin{equation}
\partial^ {\beta^h}d^2=\frac{1}{4}\partial^ {\beta^h}\left( \sum_{lm}a_{lm}(x)d^2_{x_l}d^2_{x_m}\right) ,
\end{equation}
and evaluate at $x^k$.
Finally, we get the approximation of order $m+1$, namely 
\begin{equation}\label{betam}
d^2_{M(\beta_{m+1})}=d^2_{\delta^{R_{m+1}} N}(x,y).
\end{equation}
Note that this approximation satisfies the eikonal equation \eqref{eik} and all its derivatives
up to order $m+1$, i.e. all equations
\begin{equation}
\partial^{\alpha}_x d^2=\frac{1}{4}\partial^{\alpha}_x \left( \sum_{lm}a_{lm}(x)d^2_{x_l}d^2_{x_m}\right) 
\end{equation}
with $|\alpha|\leq m+1$ at all interpolation points $x^1,\cdots x^N$.
\begin{rem} 
Note that at some stage of the construction we may have a multiindex $\gamma$ such that $\gamma-\alpha=1_{i_0}$ for some $\alpha$ and some index $i_0$. Then  the terms in the $\alpha$th derivative of the eikonal equation evaluated at $x^k$ that do not annihilate a term of form $c_{\gamma}^k\Pi_{l=0}^{k-1}(x-x^l)^{\gamma+{\bf 1}}(x-x^k)^{\gamma}$ are quite easily computed. For this reason the constants of the form $c_{\gamma}^k$ are quite easily computed. You can see very easily this by writing the $\alpha$th derivative of the eikonal equation invoking symmetry $a_{ij}=a_{ji}$. We have 
\begin{equation}\label{alpha}
\begin{array}{ll}
\partial^{\alpha}d^2(x,y)=\partial^{\alpha}\left(\frac{1}{4}\sum_{ij}a_{ij}(x)\frac{\partial d^2}{\partial x_i}\frac{\partial d^2}{\partial x_j} \right)\\
\\
=\frac{1}{2}\sum_{ij}a_{ij}(x)\left( \partial^{\alpha}\frac{\partial d^2}{\partial x_i}\right) \frac{\partial d^2}{\partial x_j} +\frac{1}{4}\sum_{ij}\left(\frac{\partial^{\alpha}}{\partial x^{\alpha}} a_{ij}(x)\right) \frac{\partial d^2}{\partial x_i} \frac{\partial d^2}{\partial x_j}\\
\\
 +\frac{1}{4}\sum_{ij}\sum_{\beta <\alpha}\sum_{\gamma \leq\beta}\binom{\alpha}{\beta}\binom{\beta}{\gamma}
\left( \partial^{\beta}a_{ij}(x)\right) 
 \left( \partial^{\alpha -\beta-\gamma}\frac{\partial d^2}{\partial x_i}\right)  \partial^{\gamma} \frac{\partial d^2}{\partial x_j} 
\end{array}
\end{equation}
If the indicated approximation is plugged into \eqref{alpha} and evaluated at $x^k$ only the terms $\frac{1}{2}\sum_{j}a_{i_0j}(x)\left( \partial^{\alpha}\frac{\partial d^2}{\partial x_{i_0}}\right) \frac{\partial d^2}{\partial x_j}$ (evaluated for approximations $d^2_{\gamma k}$ at interpolation point $x^k$) do not annihilate terms of  form $c_{\gamma}^k\Pi_{l=0}^{k-1}(x-x^l)^{\gamma+{\bf 1}}(x-x^k)^{\gamma}$.
\end{rem}

\section{Error estimates for the regular polynomial interpolation algorithm}
We first consider error estimates for $L^p$-approximations, and then extend our estimates to $H^{s,p}$-approximations. In the whole Section we consider a bounded domain $\Omega \subseteq {\mathbb R}^n$ and assume that the coefficient functions $a_{ij}$ are $C^{\infty}$.
\subsection{Error estimates for $L^p$ approximation}

We have
\begin{thm}
The approximations $d^2_{Nk}$ defined in \eqref{dNk} are $L^p$- approximations of the boundary value problems
of form \eqref{bdpk}, i.e. $L^p$- approximations for functions of form $x\rightarrow d^2(x,x^k)$ for $p>1$.
\end{thm}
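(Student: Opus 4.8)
The plan is to verify the operational criterion for an $L^p$-approximation stated above: exhibit the eikonal residual of $d^2_{Nk}$, show it is $O(h^3)$ on each cell of the mesh, and then convert this consistency estimate into $L^p$-convergence of $d_{Nk}=\sqrt{d^2_{Nk}}$ towards $d$ by a stability argument for the first-order problem \eqref{bdpk}. I write $E[u]:=\frac14\langle\nabla u,A\nabla u\rangle-u$ for the eikonal operator, $e:=d^2-d^2_{Nk}$ for the error and $r:=E[d^2_{Nk}]$ for the residual; by Theorem 2.3 one has $E[d^2]\equiv 0$, while by construction $r$ vanishes at every interpolation point $x^0,\dots,x^N$.

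First I would establish the consistency estimate $r=O(h^3)$ locally. The point is that the building block $d^2_{A^{-1}(x^k)}$ reproduces exactly the quadratic, inverse-metric part of the local representation of $d^2$ furnished by Corollary 3.1, while every correction term in \eqref{dNk} carries a factor $\Pi_r\Pi_l(x_l-x^r_l)^2$ that vanishes to second order at the nodes. Consequently $e$ agrees with $d^2$ through second order near each node, so that $e=O(h^3)$ and $\nabla e=O(h^2)$ on a cell of diameter $h$. Inserting $d^2_{Nk}=d^2-e$ into $E$ and using $E[d^2]=0$ gives the pointwise identity $r=-\frac12\langle A\nabla d^2,\nabla e\rangle+e+\frac14\langle\nabla e,A\nabla e\rangle$; combined with the vanishing of $\nabla d^2$ on the diagonal $\{x=x^k\}$ (where \eqref{lh} forces a critical point) and the vanishing of $r$ at the nodes, a Taylor expansion about the nearest node yields $r=O(h^3)$ uniformly on $\Omega$. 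This is the step in which the geometric information---the identification of the second-order coefficients with $a^{ij}$ and the smoothness of $d^2$ from Theorem 2.3---is actually used.

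Next I would carry out the stability step. Linearising $E$ about the exact solution, the identity above reads $Le=-r+\frac14\langle\nabla e,A\nabla e\rangle$, where $Le:=\frac12\langle A\nabla d^2,\nabla e\rangle-e$ is a first-order linear transport operator whose characteristics are precisely the geodesics issuing from $x^k$. Since $e$ vanishes on the diagonal by \eqref{lh}, I would integrate this transport equation along the geodesic flow across the bounded domain $\Omega$, treating the quadratic term $\frac14\langle\nabla e,A\nabla e\rangle$ as a higher-order perturbation and closing the estimate by Gronwall's inequality; the $O(h^3)$ source then yields $\|e\|_{L^\infty(\Omega)}=O(h^3)$ and hence $\|e\|_{L^p(\Omega)}\to 0$. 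Passing to the metric itself via $|\sqrt{d^2}-\sqrt{d^2_{Nk}}|=|e|/(d+d_{Nk})$, and using $d,d_{Nk}\gtrsim|x-x^k|$ together with the cubic decay of $e$ on the diagonal, controls $\|d-d_{Nk}\|_{L^p(\Omega)}$; the hypothesis $p>1$ enters exactly here, to guarantee the integrability of the quotient near $\{x=x^k\}$.

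The main obstacle I anticipate is the stability step rather than the consistency estimate. Inverting the transport operator $L$ rigorously requires that the geodesic characteristics through $\Omega$ have uniformly bounded length and remain within the region where the local expansions of Corollary 3.1 are valid, and that the nonlinear remainder is genuinely subordinate so that the Gronwall argument closes; handling simultaneously the degeneracy of $L$ and of the square root on the diagonal $\{x=x^k\}$---which is also what forces $p>1$---is the delicate part. By contrast, once the second-order matching of Corollary 3.1 is in hand, the $O(h^3)$ residual bound is comparatively routine.
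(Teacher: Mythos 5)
The decisive problem is in your consistency step, which is circular and whose key claim fails away from the diagonal. You derive $r=O(h^3)$ \emph{from} the bounds $e=O(h^3)$, $\nabla e=O(h^2)$, and then your stability step uses $r=O(h^3)$ to prove $\|e\|_{L^\infty(\Omega)}=O(h^3)$ --- which is the estimate you started from. Moreover, the purported source of those error bounds, namely that $d^2_{Nk}$ ``agrees with $d^2$ through second order near each node,'' is only available at the diagonal node $x=x^k$, where Corollary 3.1 identifies the quadratic part of $x\mapsto d^2(x,x^k)$ with the inverse-metric form and the leading term of \eqref{dNk} reproduces it. At every other node $x^j$ the $L^p$-construction of Section 4.1 does not interpolate values (or any derivatives) of $d^2(\cdot,x^k)$: all it enforces there is the eikonal equation itself, i.e.\ $r(x^j)=0$. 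A residual that merely vanishes at the nodes of a mesh of width $h$ and has uniformly bounded gradient is $O(h)$, not $O(h^3)$; residual decay of higher order, of type $O(h^{3+m})$, is precisely what the $H^{s,p}$-construction of Section 4.2 buys by additionally enforcing the differentiated equations at the nodes, and it is not available for the plain $L^p$-interpolant that this theorem concerns. (Even the $O(h)$ bound tacitly presupposes uniform control of the coefficients $c_{jk}$, which you never address.)

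The stability step also does not close as stated: Gronwall along the characteristics of $Le=\frac12\langle A\nabla d^2,\nabla e\rangle-e$ controls $e$ but not $\nabla e$, so the quadratic remainder $\frac14\langle\nabla e,A\nabla e\rangle$ cannot be absorbed without a simultaneous bootstrap in a Lipschitz-type norm; in addition the vector field $\frac12 A\nabla d^2$ vanishes at $x^k$, so the transport ODE is degenerate exactly where the data $e(x^k)=0$ is prescribed. For comparison, the paper's own proof avoids both difficulties by never linearizing the eikonal operator: it places points $z^0=x,z^1,\dots,z^k=y$ at mutual distance at most $h$ on the minimal geodesic, uses additivity $d(x,y)=\sum_i d(z^i,z^{i+1})$, compares each segment with the flat quadratic-form distance $d_g^2(z^i,z^{i+1})=\sum_{lm}a^{lm}(z^i_l-z^{i+1}_l)(z^i_m-z^{i+1}_m)$ --- whose local error is the $O(h^3)$ of Corollary 3.1, hence $O(h^{3/2})$ after taking square roots --- sums these contributions to get $|d-d_g|\le C\sqrt h$, and then bounds $\|d_g-d_{N0}\|_{L^p}$ separately; it is in that last bound, $Ch^{p-1}$, that the hypothesis $p>1$ enters, not through integrability near the diagonal as you suggest (near $x^k$ your quotient $|e|/(d+d_{Nk})$ is in fact bounded, since both squared distances vanish quadratically there with coinciding quadratic parts). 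Your consistency-plus-stability architecture is attractive and could in principle be made rigorous, but only for the $H^{s,p}$-interpolants of Section 4.2 and only after supplying a genuine a priori estimate for the degenerate transport equation; for the $L^p$-interpolant of \eqref{dNk} the argument as proposed does not go through.
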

\begin{proof}
Let $x$ and $y$ be two points connected by a geodesic curve $\gamma$ given in local coordinates with values in ${\mathbb R}^n$. Let us assume also that $x$ and $y$ are interpolation points. We have no solution for the curve $\gamma$ in general, but there are lets say $k$ points $z^0=x,z^1 \cdots z^k=y$ in the image of the curve $\gamma$ with Euclidean distance less than a certain mesh size $h$. Clearly,
\begin{equation}
d(x,y)=\sum_{i=0}^N d(z^i,z^{i+1}) 
\end{equation}
Next define an approximative distance along the geodesic of form
\begin{equation}
d_g(x,y)=\sum_{i=0}^n d_g(z^i,z^{i+1}),
\end{equation}
where $d_g$ is the squareroot of $d_g^2(z^{i},z^{i+1}):=\sum_{lm}a^{lm} (z^i_m-z^{i+1}_m)(z^i_l-z^{i+1}_l)$.
Since $y$ is fixed $d$ is approximated by $d_{N0}$ and we estimate
\begin{equation}\label{metest}
d(x,y)-d_{N0}(x,y)=d(x,y)-d_g(x,y)+d_g(x,y)-d_{N0}(x,y)
\end{equation}
Our analysis showed that the local approximation of $d^2$ by $d_g^2$ is of order $O(h^3)$ hence the approximation of $d$ by $d_g$ is of order $O\left( h^{\frac{3}{2}}\right)$, hence with generic constant $C$ we have for the first summand on the right hand side of \eqref{metest} 
\begin{equation}
|d(x,y)-d_g(x,y)|=\sum_{i=0}^N |\left( d(z^i,z^{i+1})-d_g(z^i,z^{i+1})\right)|\leq C\sqrt{h}  
\end{equation}
The modulus of the first summand on the right hand side can be estimated by
\begin{equation}
|d(x,y)-d^g(x,y)|\leq C\sqrt{h} 
\end{equation}
Since $\Omega$ is a compact bounded domain, the $C^{\infty}$ coefficient functions  $a^{ij}$ are  Lipschitz Only locally Lipschitz is needed). Assuming a suitable choice of the points on the geodesic for the second summand we get by an elementary argument that
\begin{equation}
\|d_g(x,y)-d_{N0}(x,y)\|_{L^p}\leq \sum_{i=0}^N \|d(z^i,z^{i+1})-d_{g}(z^i,z^{i+1})\|_{L^p}
\leq Ch^{p-1}.
\end{equation}

\end{proof}

\subsection{Error estimates for $H^{s,p}$ approximation}

\begin{thm}
The approximations $d^2_{M(\beta_m)}$ defined in \eqref{betam} are $H^{s,p}$- approximations of the boundary value problems
of form \eqref{bdpk} for $s\leq m$, i.e. $H^{s,p}$- approximations for functions of form $x\rightarrow d^2(x,x^k)$ for $p>1$.
\end{thm}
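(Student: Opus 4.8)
The plan is to reduce the $H^{s,p}$ claim to a finite collection of $L^p$ bounds on derivatives and then to show that, by construction, the polynomial $d^2_{M(\beta_m)}$ matches the full Taylor jet of $d^2$ up to order $m$ at every interpolation point, which forces the remaining derivative errors to be small. First I would fix the second argument $x^k$ (the construction, and hence the estimate, being identical for each $k=0,\dots,N$ by the analogous construction of Section 4.2) and use that, with the Bessel-potential definition and $p>1$, one has $H^{m,p}=W^{m,p}$ together with the embedding $H^{m,p}(\Omega)\hookrightarrow H^{s,p}(\Omega)$ for $0\le s\le m$ (on the bounded domain $\Omega$ via a standard extension). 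Consequently
\begin{equation}
\|d^2-d^2_{M(\beta_m)}\|_{s,p}\le C\sum_{|\alpha|\le m}\left\|\partial^{\alpha}\left(d^2-d^2_{M(\beta_m)}\right)\right\|_{L^p(\Omega)},
\end{equation}
so it suffices to show that each of the finitely many summands tends to $0$ as $N\to\infty$ and $h\to 0$. The term $|\alpha|=0$ is the content of the preceding theorem; the new work is the control of the genuine derivatives $1\le|\alpha|\le m$.

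Second, I would establish the nodal jet-matching. By the local representation \eqref{loc} and the recursion \eqref{rec} of the corollary of Section 3, the Taylor coefficients $d^2_{\beta}(x^k)$ of $x\mapsto d^2(x,x^k)$ are \emph{uniquely} determined, order by order, by the derivative equations of the eikonal equation \eqref{eik}. The construction of Section 4.2 solves exactly these derivative equations at every node $x^0,\dots,x^N$, and the product factors $\Pi_{l=0}^{k-1}(x-x^l)^{\beta+{\bf 1}}$ appearing in the ansatz leading to \eqref{betam} vanish to high order at the earlier nodes, so each newly introduced coefficient fixes one Taylor coefficient without disturbing those already matched. Arguing by induction on the order I would conclude
\begin{equation}
\partial^{\alpha}d^2_{M(\beta_m)}(x^k,x^k)=\partial^{\alpha}d^2(x^k,x^k),\qquad |\alpha|\le m,\ 0\le k\le N,
\end{equation}
so that the error $e:=d^2-d^2_{M(\beta_m)}$ has vanishing Taylor jet up to order $m$ at each node of a mesh of width $h$.

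Third, I would convert this nodal vanishing into a uniform bound. Since $a_{ij}\in C^\infty$ and $\Omega$ is bounded, $d^2$ is smooth with derivatives bounded on $\overline\Omega\times\overline\Omega$ (theorem 2.3), and the recursively computed coefficients $c_{\beta}^k$ remain bounded as $h\to 0$ thanks to the nondegenerate prefactor $1-\sum_i\beta_i\neq 0$ (for $|\beta|\ge 2$) and the uniform positivity of $(a_{ij})$; hence the $(m+1)$-st derivatives of $e$ are bounded uniformly in $h$. The standard Taylor-remainder estimate for a function whose jet vanishes to order $m$ at every point of a mesh of width $h$ then gives, locally and therefore uniformly on $\Omega$,
\begin{equation}
|\partial^{\alpha}e(x)|\le C\,h^{\,m+1-|\alpha|},\qquad |\alpha|\le m,
\end{equation}
which in particular supplies the residual smallness required by the $H^{s,p}$-criterion of Section 4. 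Raising to the $p$-th power and integrating over the bounded domain yields $\|\partial^{\alpha}e\|_{L^p(\Omega)}\le C'\,h^{\,m+1-|\alpha|}\to 0$; summing over $|\alpha|\le m$ and combining with the first step gives $\|d^2-d^2_{M(\beta_m)}\|_{s,p}\to 0$ for every $s\le m$, as claimed.

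The hard part will be the jet-matching step: rigorously showing that solving the \emph{derivative equations} of \eqref{eik} at the nodes forces the \emph{derivatives} of the approximation to coincide with those of $d^2$ there. This demands careful index bookkeeping to identify which equation $\partial^{\beta}(\text{eikonal})$ pins down which coefficient $c_{\beta}^k$; the decisive point, already visible in the remark following \eqref{betam}, is that when the approximation is inserted into the $\alpha$-th derivative of \eqref{eik} and evaluated at $x^k$, only the single leading term $\tfrac12\sum_j a_{i_0 j}\,(\partial^{\alpha}\partial_{x_{i_0}}d^2)\,\partial_{x_j}d^2$ fails to annihilate the new monomial, so each equation is genuinely solvable for exactly one new coefficient and the induction closes. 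The secondary obstacle is the uniform-in-$h$ boundedness of the $c_{\beta}^k$, needed so that the remainder constant $C$ above does not blow up under mesh refinement; I expect this to follow from the nondegeneracy $1-\sum_i\beta_i\neq 0$ of the recursion \eqref{rec} together with the uniform smoothness and positive-definiteness of $(a_{ij})$ on $\overline\Omega$.
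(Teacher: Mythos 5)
Your proposal follows essentially the same route as the paper's own proof: nodal matching of all derivatives up to order $m$ by construction, the multivariate Taylor theorem to convert that nodal matching into a local error bound of order $h^{m+1-|\alpha|}$ for each $\partial^{\alpha}$ with $|\alpha|\le m$, and then the $L^p$ reasoning of the preceding section to conclude convergence in $H^{s,p}$ for $s\le m$. The points you elaborate beyond the paper --- the explicit reduction of the $H^{s,p}$ norm to finitely many $L^p$ norms of derivatives, the justification of the jet-matching via the uniqueness recursion of Section 3 together with the annihilation structure of the ansatz, and the uniform-in-$h$ boundedness of the coefficients $c_{\beta}^k$ --- are refinements of, not departures from, the paper's argument.
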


\begin{proof}
For fixed $y$ the function $x\rightarrow d^2(x,y)$ and the function $x\rightarrow d^2_{M(\beta_m)}(x,y)$ both satisfy the eikonal equation and its derivatives at any interpolation point by construction. That means that for all interpolation points $x_j,~1\leq j\leq N$ and all derivatives $\gamma\leq m$ we have
\begin{equation}
\partial^{\gamma}_xd^2(x_j,y)=\partial^{\gamma}_x d^2_{M(\beta_m)}(x_j,y).
\end{equation}
Next recall a multivariate version of Taylor's theorem
\begin{thm}
If $f\in C^{\infty}$, then for all positive integers $M$ we have
\begin{equation}
\begin{array}{ll}
f(x+y)=\sum_{|\alpha|<M}\frac{(\partial{\alpha}f)(x)}{\alpha!}y^{\alpha}\\
\\
+M\sum_{|\gamma| =M}\frac{y^{\gamma}}{\gamma !}\int_0^1(1-\theta)^{M-1}(\partial^{\gamma}f)(x+\theta y)d\theta
\end{array}
\end{equation}
\end{thm}
Applying this formula, we see from our construction of $x\rightarrow d^2_{M(\beta_m)}(x,y)$ that the local order of approximation of $x\rightarrow d^2(x,y)$ is $O(h^{3+m})$. A similar reasoning as in the preceding Section leads to the result. Note here that the same reasoning holds when $y$ is replaced by another interpolation point $x^j$.
\end{proof}

\begin{rem}(Sharper error estimates)
A little analysis shows that the local order of approximation is
\begin{equation}
d^2(x,y)-d^2_{M(\beta_m)}(x,y)\leq C
P\frac{h^{m}}{m !},
\end{equation}
where $P$ is the number of multiindices of order $m$ and
\begin{equation}
C:=2\max\left\lbrace \sum_{|\gamma| =M}\sup_{x\in \Omega}\partial^{\gamma}d^2(x,y),\sum_{|\gamma| =M}\sup_{x\in \Omega}\partial^{\gamma}d^2_{M(\beta)}(x,y)\right\rbrace 
\end{equation}
Similarly for $|\beta | \leq m $ we get
\begin{equation}
\partial^{\beta}d^2(x,y)-\partial^{\beta}d^2_{M(\beta_m}(x,y)\leq C
\frac{h^{m -|\beta|}}{(m -|\beta |) !},
\end{equation}
Since we are working on a bounded domain and $d^2$ is $C^{\infty}$ there is some bound $C$, but not a priori known. However bounds for $C$ may be obtained from a priori estimates by inspection of the eikonal equation.
It is clear that
\begin{equation}
 d^2=\frac{1}{4}\sum_{ij}a_{ij}d^2_{x_i}d^2_{x_j}.
\end{equation}
is equivalent to
\begin{equation}
 d^2=\frac{1}{4}\sum_{i}\lambda_{i}d^2_{x_i}d^2_{x_i},
\end{equation}
and, hence
\begin{equation}
|d^2_{x_i}|\leq \frac{4d^2}{\lambda_{\min}},
\end{equation}
where $\lambda_{\min}=\min_i \inf_{x\in \Omega}\lambda(x)$.
Further a priori estimates for the derivatives may be obtained from derivatives of the eikonal equation.
\end{rem}

\section{Analytic approximations of the fundamental solution of parabolic equations}

Consider the parabolic equation
\begin{equation}
\frac{\partial p}{\partial t}-\frac{1}{2}\sum_{ij}a_{ij}(x)\frac{\partial^2 p}{\partial x_i\partial x_j}-\sum_i b_i(x)\frac{\partial p}{\partial x_i}=0
\end{equation}
on some domain $(0,T)\times \Omega \subseteq (0,T)\times {\mathbb R}^n$, and where 
$x\rightarrow (a_{ij}(x))$ is a matrix-valued $C^{\infty}$-function with symmetric positive matrix $(a_{ij}(x))$ 
for all $x\in \Omega$, and $x\rightarrow b(x)$ is also a $C^{\infty}$-function. 
Plugging in the Ansatz
\begin{equation}\label{WKBrep}
p(t,x,y)=\frac{1}{\sqrt{2\pi t}^n}\exp\left(-\frac{d^2(x,y)}{2 t}+\sum_{k= 0}^{\infty}c_k(x,y)t^k\right), 
\end{equation}
leads to the recursive equation \eqref{eik} for $d^2$. Given $d^2$ the first order recursive equation 
\begin{equation}\label{c01e}
-\frac{n}{2}+\frac{1}{2}Ld^2+\frac{1}{2}\sum_{i} \left( \sum_j\left( a_{ij}(x)+a_{ji}(x)\right) \frac{d^2_{x_j}}{2}\right) \frac{\partial c_{0}}{\partial x_i}(x,y)=0,
\end{equation}
together with the boundary condition 
\begin{equation}\label{c01b}
c_0(y,y)=-\frac{1}{2}\ln \sqrt{\mbox{det}\left(a^{ij}(y) \right) }
\end{equation}
determines $c_0$ uniquely for each $y\in {\mathbb R}^n$. Furthermore, having computed all WKB-coefficient functions $c_l$ up to order $k$, for $k+1\geq 1$ the coefficient function $c_{k+1}$  can be computed via the first order equation
\begin{equation}\label{1gaa}
\begin{array}{ll}
(k+1)c_{k+1}(x,y)+\frac{1}{2}\sum_{ij} a_{ij}(x)\Big(
\frac{d^2_{x_i}}{2}\frac{\partial c_{k+1}}{\partial x_j}
+\frac{d^2_{x_j}}{2} \frac{\partial c_{k+1}}{\partial x_i}\Big)\\
\\
=\frac{1}{2}\sum_{ij}a_{ij}(x)\sum_{l=0}^{k}\frac{\partial c_l}{\partial x_i} \frac{\partial c_{k-l}}{\partial x_j}
+\frac{1}{2}\sum_{ij}a_{ij}(x)\frac{\partial^2 c_k}{\partial x_i\partial x_j}  
+\sum_i b_i(x)\frac{\partial c_{k}}{\partial x_i},
\end{array}
\end{equation}
with the boundary conditions
\begin{equation}\label{Rk}
c_{k+1}(x,y)=R_k(y,y) \mbox{ if }~~x=y,
\end{equation}
$R_k$ being the right side of \eqref{1gaa}. 
We will show in a subsequent paper that equations \eqref{c01e}
\eqref{c01b}, and \eqref{1gaa},\eqref{Rk} can be solved or approximated to higher order if \eqref{eik} is solved or approximated to higher order. We have
\begin{prop}
In order to compute the WKB-approximation up to order $k$ a $H^{s,p}$ approximation of $d^2$ for $s\geq 2k$ is sufficient.  
\end{prop}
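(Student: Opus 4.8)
The plan is to unwind the recursive dependence between the WKB-coefficients and the squared metric $d^2$, and to track how many derivatives of $d^2$ are needed to determine $c_k$ to the required accuracy. The key observation, already hinted at in the introduction, is that the recursion \eqref{1gaa} for $c_{k+1}$ involves a second-order derivative $\frac{\partial^2 c_k}{\partial x_i\partial x_j}$ of the previous coefficient. Since each coefficient $c_k$ is defined through a transport equation whose coefficients are built from $d^2$ and its first derivatives, differentiating $c_k$ twice forces us to differentiate $d^2$ two additional times at each step of the recursion. First I would make this dependence precise by an induction on $k$, establishing the claim that $c_k$ depends on $d^2$ through derivatives of order at most $2k$.

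For the base case I would inspect equation \eqref{c01e} together with the boundary condition \eqref{c01b}. The transport equation for $c_0$ involves $Ld^2$ (where $L$ is the second-order operator $\tfrac12\sum a_{ij}\partial_i\partial_j + \sum b_i\partial_i$) and the first derivatives $d^2_{x_j}$; thus the top-order dependence is through $Ld^2$, i.e. through second derivatives of $d^2$. So $c_0$ requires knowledge of $d^2$ up to order $2 = 2\cdot 0 + 2$ derivatives, which is covered by an $H^{s,p}$-approximation with $s\geq 2$. For the inductive step, assume that $c_0,\dots,c_k$ have been determined from $d^2$ up to derivatives of order $2k$. The right-hand side of \eqref{1gaa} contains the term $\frac{\partial^2 c_k}{\partial x_i\partial x_j}$, which introduces two further derivatives, so the data determining $c_{k+1}$ involves derivatives of $d^2$ of order at most $2k+2 = 2(k+1)$. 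The bilinear term $\sum_l \partial c_l\,\partial c_{k-l}$ and the drift term $b_i\,\partial c_k$ introduce only lower-order derivatives of $d^2$ and so do not raise the count. This closes the induction and gives the bound $s\geq 2k$ to control all of $c_0,\dots,c_k$.

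To complete the argument I would invoke Theorem~5.3 (the $H^{s,p}$-approximation result): an approximation $d^2_{M(\beta_m)}$ with $m\geq s$ reproduces $d^2$ together with all its derivatives up to order $m$ at the interpolation points, with local error $O(h^{3+m})$ and convergence in $H^{s,p}$. Substituting this approximation into the transport equations \eqref{c01e}, \eqref{1gaa} in place of the exact $d^2$, the induction above shows that every ingredient entering the determination of $c_0,\dots,c_k$ is an $H^{s,p}$-functional of $d^2$ with $s\leq 2k$, hence is approximated to the required accuracy whenever we dispose of an $H^{s,p}$-approximation of $d^2$ with $s\geq 2k$. This is exactly the assertion of the proposition.

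The main obstacle is not the derivative-counting bookkeeping, which is routine, but rather verifying that the transport equations themselves \emph{propagate} the approximation error in a controlled way: one must check that substituting an approximate $d^2$ into \eqref{c01e} and \eqref{1gaa} yields a perturbed first-order equation whose solution stays close to the true $c_k$ in the relevant norm. This requires that the characteristic field $\sum_j a_{ij}\,\tfrac{d^2_{x_j}}{2}$ be uniformly non-degenerate and that stability estimates for the linear transport operators hold on the bounded domain $\Omega$; the smoothness and boundedness hypotheses on $a_{ij}$ and $b_i$, together with the non-vanishing of $\nabla d^2$ away from the diagonal established in Corollary~3.1, are what make this step go through, though a fully rigorous treatment of the error propagation is deferred to the announced subsequent paper.
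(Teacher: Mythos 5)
Your proposal is correct and takes essentially the same route as the paper: the paper's entire proof is the single observation that each step of the recursion for $c_{k+1}$ applies a second-order operator to the previously computed WKB-coefficients, which is exactly the derivative-counting induction you carry out in detail. Your closing caveat about error propagation through the transport equations is also consistent with the paper, which explicitly defers that stability analysis to a subsequent paper rather than addressing it in the proof of this proposition.
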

\begin{proof}
In each recursion step \eqref{1gaa} an operator of order $2$ is applied to the previously computed WKB-coefficients. 
\end{proof}

\section{Conclusion and final remarks on computational issues}

We have established a stable algorithm for efficient computation of the length of geodesics as a function of two arbitrary points on $C^k$- Riemannian manifold with minimal geodesic as well as of  partial derivatives of the length functional (of principally any order) accurately. We established error estimates in arbitrary Sobolev norms. We showed how the algorithm can be applied in order to compute fundamental solutions of irreducible linear parabolic equations. There are many obvious applications to mathematical physics and finance as well as  to statistics, e.g. to the maximum log-likelihood method, to option pricing, computing transition amplitudes etc.. Finally, we remark that the interpolation polynomials should not be evaluated in the way they are constructed. Here careful implementation of Horner schemes is needed. But this computational issues will be considered in a subsequent paper.

\end{document}